\documentclass{jcmlatex}

\setcounter{page}{1}

\usepackage{color}

\begin{document}

\markboth{O.~BURDAKOV, Y.-H.~DAI AND N.~HUANG}{Stabilized Barzilai-Borwein method}

\title{STABILIZED BARZILAI-BORWEIN METHOD}

\author{Oleg Burdakov
\thanks{Department of Mathematics, Link\"oping University, Link\"oping, Sweden \\ Email: oleg.burdakov@liu.se}
\and
 Yu-Hong Dai
\thanks{LSEC, ICMSEC, Academy of Mathematics and Systems Science, Chinese Academy of Sciences, Beijing, China\\ Email: dyh@lsec.cc.ac.cn}
\and
Na Huang\footnote{Corresponding author}
\thanks{Department of Applied Mathematics, College of Science, China Agricultural University, Beijing, China\\ Email: hna@cau.edu.cn}
}

\maketitle
\thispagestyle{empty}

\begin{abstract}
The Barzilai-Borwein (BB) method is a popular and efficient tool for solving large-scale unconstrained optimization problems. Its search direction is the same as for the steepest descent (Cauchy) method, but its stepsize rule is different. Owing to this, it converges much faster than the Cauchy method. A feature of the BB method is that it may generate too long steps, which throw the iterates too far away from the solution. Moreover, it may not converge, even when the objective function is strongly convex. In this paper, a stabilization technique is introduced. It consists in bounding the distance between each pair of successive iterates, which often allows for decreasing the number of BB iterations. When the BB method does not converge, our simple modification of this method makes it convergent. For strongly convex functions with Lipschits gradients, we prove its global convergence, despite the fact that no line search is involved, and only gradient values are used. Since the number of stabilization steps is proved to be finite, the stabilized version inherits the fast local convergence of the BB method. The presented results of extensive numerical experiments show that our stabilization technique often allows the BB method to solve problems in a fewer iterations, or even to solve problems where the latter fails.
\end{abstract}

\begin{classification}
65K05, 90C06, 90C30.
\end{classification}

\begin{keywords}
Unconstrained optimization, Spectral algorithms, Stabilization, Convergence analysis.
\end{keywords}

\section{Introduction}
\label{sec:into}
In this paper, we consider spectral gradient methods for solving the unconstrained optimization problem
\begin{equation}\label{umin}
\min_{x \in R^n} f(x),
\end{equation}
where $f: R^n \rightarrow R^1$ is a sufficiently smooth function. Its minimizer is denoted by $x^*$.
Gradient-type iterative methods used for solving problem \eqref{umin} have the form
\begin{equation}\label{eqitr}
x_{k+1}=x_k-\alpha_kg_k,
\end{equation}
where $g_k=\nabla f(x_k)$ and $\alpha_k>0$ is a stepsize. Methods of this type differ in the stepsize rules which they follow.

We focus here on the two choices of $\alpha_k$ proposed in 1988 by Barzilai and Borwein \cite{Barzilai1988two}, usually refereed to as the BB method. The rationale behind these choices is related to viewing the gradient-type methods as quasi-Newton methods, where $\alpha_k$ in \eqref{eqitr} is replaced by the matrix $D_k=\alpha_kI$. This matrix is served as an approximation of the inverse Hessian matrix. Following the quasi-Newton approach, the stepsize is calculated by forcing either $D_k^{-1}$ (BB1 method) or $D_k$ (BB2 method) to satisfy the secant equation in the least squares sense. The corresponding two problems are formulated as
\begin{equation}\label{ls}
\min_{D=\alpha I}~\|D^{-1}s_{k-1}-y_{k-1}\| \quad \textrm{and} \quad
\min_{D=\alpha I}~\|s_{k-1}-Dy_{k-1}\|,
\end{equation}
where $s_{k-1}=x_k-x_{k-1}$ and $y_{k-1}=g_k-g_{k-1}$. The solutions to these problems are
\begin{equation}\label{bb}
\alpha_k^{BB1}=\frac{s_{k-1}^Ts_{k-1}}{s_{k-1}^Ty_{k-1}} \quad \textrm{and} \quad \alpha_k^{BB2}=\frac{s_{k-1}^Ty_{k-1}}{y_{k-1}^Ty_{k-1}},
\end{equation}
respectively. Here and in what follows, $\|\cdot \|$ denotes the Euclidean vector norm and the induced matrix norm. Other norms used in this paper will be denoted in a different way.

Barzilai and Borwein \cite{Barzilai1988two} proved that their method converges $R$-superlinearly for two-dimensional strictly convex quadratics. Dai and Fletcher \cite{dai2005asymptotic} analyzed the asymptotic behavior of BB-like methods, and they obtained $R$-superlinear convergence of the BB method for the three-dimensional case. Global convergence of the BB method for the $n$-dimensional case was established by Raydan \cite{raydan1993Barzilai} and further refined by Dai and Liao \cite{dai2002r} for obtaining the R-linear rate. For nonquadratic functions, local convergence proof of the BB method with R-linear rate was, first, sketched in some detail by Liu and Dai \cite{LiuDai2001}, and then it was later rigorously proved by Dai et al. \cite{dai2006cyclic}.
Extensive numerical experiments show that the two BB stepsize rules significantly improve the performance of gradient methods (see, e.g., \cite{fletcher2005barzilai,raydan1997barzilai}), both in quadratic and nonquadratic cases.

A variety of modifications and extensions have been developed, such as gradient methods with retards \cite{friedlander1998gradient}, alternate BB method \cite{dai2005projected}, cyclic BB method \cite{dai2006cyclic}, limited memory gradient method \cite{CurtisGuo18} etc. Several approaches were proposed for dealing with nonconvex objective functions, in which case the BB stepsize \eqref{bb} may become negative. In our numerical experiments, we use the one proposed in \cite{dai2015positive}.
The BB method has been extended to solving symmetric and nonsymmetric linear equations \cite{dai2015positive,dai2005analysis}. Furthermore, by incorporating the nonmontone line search by Grippo et al. \cite{Grippo_etal_86}, Raydan \cite{raydan1997barzilai} and Grippo et al. \cite{GrippoSciandrone2002} developed the global BB method for general unconstrained optimization problems. Later, Birgin et al. \cite{birgin2000nonmonotone} proposed the so-called spectral projected gradient method which extends Raydan's method to smooth convex constrained problems. For more works on BB-like methods, see \cite{Birgin2014review,fletcher2005barzilai,yuan2008step} and references therein.

As it was observed by many authors, the BB method may generate too long steps, which throw the iterates too far away from the solution. In practice, it may not converge even for strongly convex functions (see, e.g., \cite{fletcher2005barzilai}). The purpose of this paper is to introduce a simple stabilization technique and to justify its efficiency both theoretically and practically. Our stabilization does not assume any objective function evaluations. It consists in uniformly bounding $\|s_k\|$, the distance between each pair of successive iterates. It should be emphasized that, if the BB method safely converges for a given function, then there is no necessity in stabilizing it. In such cases, the stabilization may increase the number of iterations. In other cases, as it will be demonstrated by results of our numerical experiments, the stabilization may allow for decreasing the number of iterations or even to make the BB method convergent.

Although we focus here on stabilizing the conventional BB method, our approach can directly be combined with the existing modifications of the BB method, where a nonmonotone line search is used.

The paper is organized as follows. In the next section, we present an example of a strictly convex function and show that the BB method does not converge in this case. This contributes to a motivation for stabilizing this method. In the same section, its stabilized version is introduced. In Section \ref{sec:convergence}, a global convergence of our stabilized BB algorithm as well as its R-linear rate of convergence are proved under suitable assumptions. Results of numerical experiments are reported and discussed in Section \ref{sec:numres}. Finally, some conclusions are included in the last section of the paper.

\section{Stabilized Algorithm}
\label{sec:algo}
Before formulating our stabilized algorithm, we wish to begin with a motivation based on presenting an example of a strongly convex function for which we theoretically prove that neither of the BB methods converge.
%%To the best of our knowledge, no theoretical evidence of this kind is available in the literature.
To the best of our knowledge, no theoretical evidence of BB methods being divergent is available in the literature.

In the review paper by Fletcher \cite{fletcher2005barzilai}, it is claimed that the BB method diverges in practice for certain initial points in the test problem referred to as \textsf{Strictly Convex 2} by Raydan \cite{raydan1997barzilai}, in which
\begin{equation}\label{raydan}
f(x) = \sum_{i=1}^{n} i (e^{x_i} - x_i) / 10.
\end{equation}
This strongly convex function will be used in Section \ref{sec:numres} for illustrating the efficiency of the stabilized algorithm.
Our numerical experiments show that, in this specific case, the failure of the BB method is related to the underflow and overflow effects in the computer arithmetic. We are not acquainted with any theoretical justification of the divergence of the BB method for this or any other functions.

We will present now an instance of a function for which the BB method does not converge in the exact arithmetic. For this purpose, the notation
$$
a=\sqrt{5}-1, \quad b = \sqrt{5}+3, \quad
c_1=\frac{3\sqrt{5}+8}{4},\quad c_2=-\frac{5\sqrt{5}+11}{32},\quad
f(a)=\dfrac{c_1 a^2}{2}+\frac{c_2 a^4}{4}
$$
will be used. Consider the univariate function
\begin{equation}\label{f}
  f(x)=\left\{\begin{array}{ll}
\dfrac{1}{4}(x+a)^2 - (\sqrt{5}+1)(x+a) +f(a), & x< -a,\\[8pt]
\dfrac{ c_1}{2}x^2+\dfrac{c_2}{4}x^4,       & -a \le x\le a,\\[8pt]
\dfrac{1}{4}(x-a)^2 + (\sqrt{5}+1)(x-a) + f(a), & x> a.
\end{array}\right.
\end{equation}
Its first derivative
$$
g(x)=\left\{\begin{array}{ll}
\dfrac{1}{2}(x+a)-\sqrt{5}-1,& x < -a,\\[8pt]
c_1 x +c_2 x^3,     & -a \le x \le a,\\[8pt]
\dfrac{1}{2}(x-a)+\sqrt{5}+1, & x > a
\end{array}\right.
$$
is continuously differentiable, and $g(x)$ is an odd monotonically increasing function (see Figure \ref{fig_g}).
\begin{figure}[thp]
	\vspace{-8.38cm}
	\hspace{-2.2cm} \includegraphics[width=1.25\textwidth]{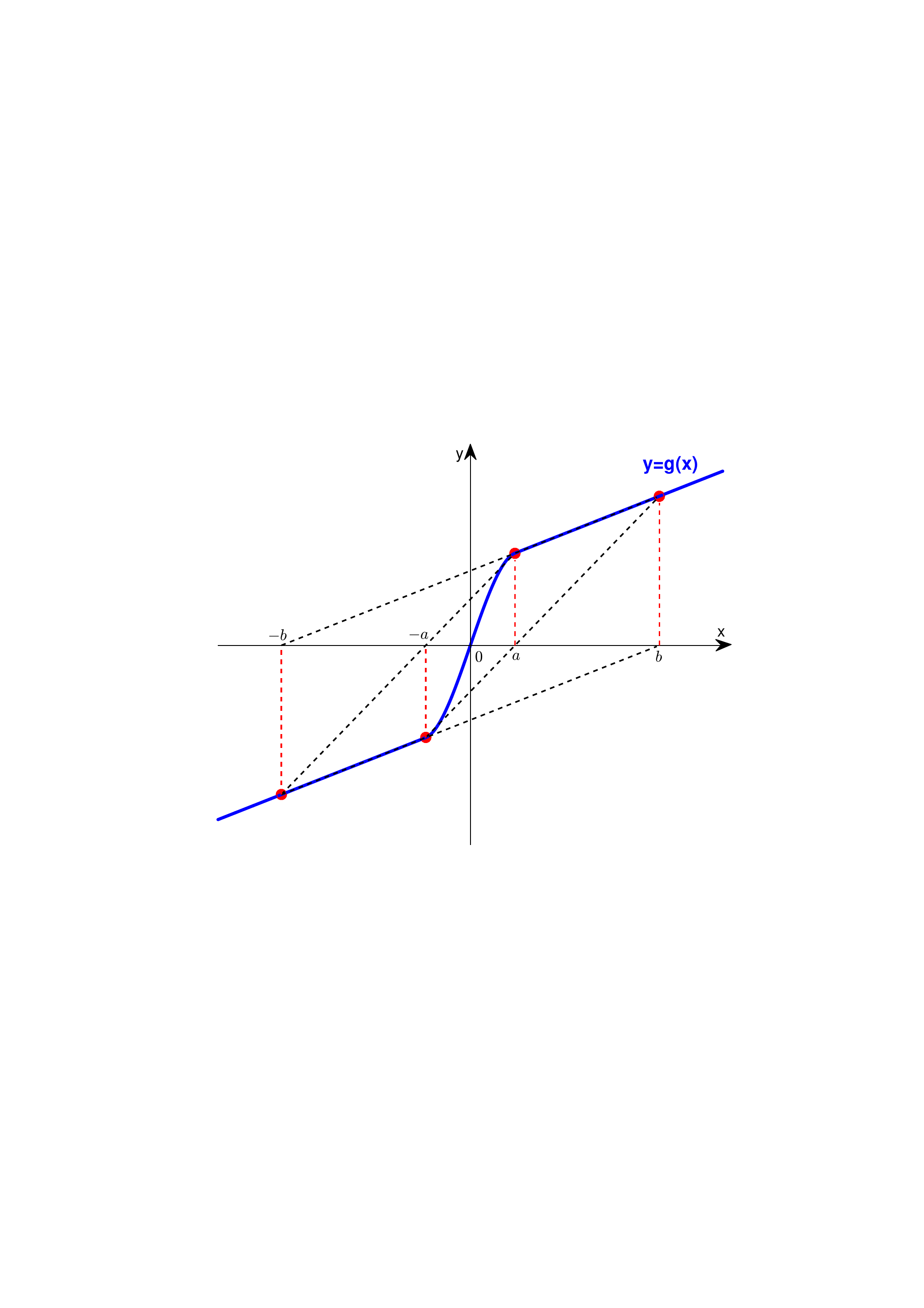}
	\vspace{-9.3cm}
	\caption{Cyclic iterates generated by the BB method for function \eqref{f}.}\label{fig_g}
\end{figure}
It can be easily verified that the function $f(x)$ is twice continuously differentiable with
$$
1/2 \le f''(x) \le c_1, \quad \forall x \in R^1.
$$
This means that this function is strongly convex, and its first derivative is Lipschitz-continuous.

For any univariate objective function, there is no difference between BB1 and BB2 versions, and they are equivalent to the secant method applied to the first derivative. For function \eqref{f}, if to initiate the BB method with $x_0 = -b$ and $x_1 = -a$, then the subsequent iterates are
\begin{eqnarray*}
&&x_2 = x_1 - \frac{x_1-x_0}{g(x_1)-g(x_0)}g(x_1)= b,\\
&&x_3 = x_2 - \frac{x_2-x_1}{g(x_2)-g(x_1)}g(x_2)= a,\\
&&x_4 = x_3 - \frac{x_3-x_2}{g(x_3)-g(x_2)}g(x_3)= -b = x_0,\\
&&x_5 = x_4 - \frac{x_4-x_3}{g(x_4)-g(x_3)}g(x_4)= -a = x_1.\\
\end{eqnarray*}
This clearly shows that the BB method cycles between four points (see Figure \ref{fig_g}). The presented counter-example can be easily extended to $n$-dimensional case.
As an example, one can consider a separable objective function equal to the sum of any number of functions of the form \eqref{f}, where no variable appears in more than one of these functions.

After motivating the necessity of stabilizing the BB method, we can now proceed to presenting the basic idea of our stabilized BB algorithm, where $\Delta > 0$ is a parameter. It consists in choosing the stepsize in \eqref{eqitr} in the way that $\|x_{k+1}-x_k\| = \Delta$, whenever $\|\alpha_k^{BB} g_k\| > \Delta$,
i.e. $\alpha_k^{BB} > \Delta / \|g_k\|$. In other cases, we choose $\alpha_k =  \alpha_k^{BB}$, which results in $\|x_{k+1}-x_k\| \le \Delta$.
Thus, denoting
$$\alpha_k^{stab} = \frac{\Delta }{\|g_k\|},$$
we propose to choose
\begin{equation}\label{alphaBB}
\alpha_k = \min \{\alpha_k^{BB}, \, \alpha_k^{stab}\}.
\end{equation}
Here $\alpha_k^{BB} = \alpha_k^{BB1}$ or $\alpha_k^{BB} = \alpha_k^{BB2}$, depending on the specific BB method in \eqref{bb}. A formal description of our stabilized BB algorithm follows.

\begin{algorithm}\label{BBstab}
BBstab.
\rm{\begin{tabbing}
	\rule{\textwidth}{1px}\\
	\hspace{0.5cm}\=\hspace{0.5cm}\=\hspace{0.5cm}\=\hspace{0.5cm}\=\kill
	\textbf{Given:} initial points $x_0, x_1 \in \mathbb{R}^{n}$ such that $x_0 \neq x_1$, and scalar $\Delta > 0$.\\
	\rule{\textwidth}{0.5px}\\
	Evaluate $g_0$ and $g_1$. \\
	\textbf{for} $k=1,2, \ldots$ \textbf{do}\\
	\> \textbf{if} $g_k=0$ \textbf{then} stop. \\
	\> Set $s_{k-1} \leftarrow x_k - x_{k-1}$ and $y_{k-1} \leftarrow g_k - g_{k-1}$. \\
	\> Compute $\alpha_k$ by formula \eqref{alphaBB}. \\
	\> Set $x_{k+1} \leftarrow x_k - \alpha_k g_k$ and evaluate $g_{k+1}$. \\
	\textbf{end (for)}
	\\
	\rule{\textwidth}{0.5px}
\end{tabbing}}
\end{algorithm}

This algorithm will be refereed to as BB1stab or BB2stab depending on the corresponding choice of $\alpha_k^{BB}$ in \eqref{bb}. Note that, for $\Delta = +\infty$, it reduces to the underlying standard BB algorithm.

\section{Convergence Analysis}
\label{sec:convergence}
In this section, global convergence of the BBstab algorithm will be proved. Whenever iterates $\{x_k\}$ are mentioned, they are assumed to be generated by BBstab, where it is required that
$x_0 \neq x_1$.

Throughout this section, the objective function is assumed to comply with the following requirement.
\begin{description}
	\item[A1.]
The function $f: R^n \rightarrow R^1$ is twice continuously differentiable, and there exist positive constants $\Lambda_1 \le \Lambda_2$ such that
\begin{equation}\label{convexity}
\Lambda_1 \|v\|^2 \le v^T \nabla^2f(x) v \le \Lambda_2 \|v\|^2, \quad \forall x, v \in R^n.
\end{equation}
\end{description}

This assumption implies that
\begin{equation}\label{g_x}
\Lambda_1 \|x - x^*\| \le \|g(x)\| \le \Lambda_2 \|x - x^*\|,
\quad \forall x \in R^n.
\end{equation}
Extra assumptions are introduced below in proper places.

We shall use the following notation:
\begin{equation*}
\begin{split}
\Omega_1 & = \{x \in R^n: \ \|g(x)\| \le \Lambda_1 \Delta\},
\\
\Omega_2 & = \{x \in R^n: \ \Lambda_1 \Delta\ < \|g(x)\| \le \Lambda_2 \Delta\},
\\
\Omega_3 & = \{x \in R^n: \ \Lambda_2 \Delta\ < \|g(x)\|\},
\\
\Omega_{3'} & = \{x \in R^n: \ \Lambda_2 \Delta\ < \|g(x)\| \le \varkappa \Lambda_2 \Delta\},
\end{split}
\end{equation*}
which will be motivated later. Here
$$
\varkappa  = \dfrac{\Lambda_2}{\Lambda_1}.
$$
Obviously, $\Omega_{3'} \subset \Omega_3$, and
$\Omega_{1,2,3} = R^n$, where
$\Omega_{1,2,3} = \Omega_1 \cup \Omega_2 \cup \Omega_3$.
We shall use similar notation for other unions of sets $\Omega_i$.

Inequalities \eqref{convexity} ensure that
\begin{equation}\label{alphaBBbound}
\dfrac{1}{\Lambda_2} \le \alpha_k^{BB} \le \dfrac{1}{\Lambda_1}, \quad \forall k \ge 1,
\end{equation}
which in turn means that
\begin{equation}\label{alpha_k_bound}
\alpha_k \le \min \left\lbrace
\frac{\Delta}{\|g_k\|}, \frac{1}{\Lambda_1}\right\rbrace ,
\quad \forall k \ge 1,
\end{equation}
and
\begin{equation}\label{alpha_large_k}
\frac{1}{\varkappa \Lambda_2} \le \alpha_k \le \frac{1}{\Lambda_1},
\quad \forall x_k \in \Omega_{1,2,3'}.
\end{equation}
These bounds justify the implications
\begin{equation}\label{x_k_alpha_k}
\begin{split}
x_k \in \Omega_1 \quad & \Rightarrow \quad \alpha_k = \alpha_k^{BB},
\\
x_k \in \Omega_2 \quad & \Rightarrow \quad \alpha_k =
\min\{\alpha_k^{BB}, \, \alpha_k^{stab}\},
\\
x_k \in \Omega_3 \quad & \Rightarrow \quad \alpha_k =
\alpha_k^{stab}.
\end{split}
\end{equation}

We can now prove the following result.
\begin{lemma}\label{lem:g_k}
Let $x_0, x_1 \in R^n$ be arbitrary starting points. Then for any $\Delta > 0$, the iterates $\{x_k\}$ have the property that
\begin{equation}\label{norm_g_k}
\|g_{k+1}\| \le \left\lbrace
\begin{array}{rl}
q_k \|g_k\|, & {\rm if}\  x_k \in \Omega_3, \\
\varkappa \|g_k\|, & {\rm otherwise},
\end{array}
\right.
\quad \forall k \ge 1,
\end{equation}
where
$$
q_k = 1 - \frac{\Lambda_1 \Delta}{\|g_k\|}.
$$
\end{lemma}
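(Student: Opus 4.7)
The plan is to express the gradient change as a linear map applied to $g_k$ and control its operator norm through the bounds on $\alpha_k$ supplied by \eqref{alphaBBbound}--\eqref{x_k_alpha_k}. Using $x_{k+1} = x_k - \alpha_k g_k$ and the fundamental theorem of calculus applied to $\nabla f$ along the segment from $x_k$ to $x_{k+1}$, I obtain
\begin{equation*}
g_{k+1} = (I - \alpha_k H_k)\,g_k, \qquad H_k = \int_0^1 \nabla^2 f(x_k - t\alpha_k g_k)\,dt.
\end{equation*}
By \textbf{A1}, $H_k$ is symmetric with spectrum in $[\Lambda_1, \Lambda_2]$, so
\begin{equation*}
\|g_{k+1}\| \le \max\bigl\{|1 - \alpha_k\Lambda_1|,\ |1 - \alpha_k\Lambda_2|\bigr\}\,\|g_k\|.
\end{equation*}
The rest is a case analysis that evaluates this maximum.

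If $x_k \in \Omega_3$, then \eqref{x_k_alpha_k} forces $\alpha_k = \alpha_k^{stab} = \Delta/\|g_k\|$, and the defining inequality $\|g_k\| > \Lambda_2 \Delta$ places both $\alpha_k\Lambda_1$ and $\alpha_k\Lambda_2$ strictly inside $(0,1)$. The maximum above therefore equals $1 - \alpha_k\Lambda_1 = q_k$, giving $\|g_{k+1}\| \le q_k\|g_k\|$.

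If instead $x_k \in \Omega_1 \cup \Omega_2$, I would first check that $\alpha_k \in [1/\Lambda_2,\,1/\Lambda_1]$. On $\Omega_1$ this follows directly from \eqref{alphaBBbound} and \eqref{x_k_alpha_k}. On $\Omega_2$ the inequality $\Lambda_1\Delta < \|g_k\| \le \Lambda_2\Delta$ places $\alpha_k^{stab}$ in $[1/\Lambda_2,\,1/\Lambda_1)$, so the minimum in \eqref{alphaBB} of two numbers in $[1/\Lambda_2,\,1/\Lambda_1]$ also lies in this interval. For such $\alpha_k$ one has $|1 - \alpha_k\Lambda_1| \le 1 - 1/\varkappa$ and $|1 - \alpha_k\Lambda_2| \le \varkappa - 1$, which together yield $\|g_{k+1}\| \le (\varkappa - 1)\|g_k\| \le \varkappa\|g_k\|$.

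The main obstacle, such as it is, is bookkeeping in the $\Omega_2$ branch: the minimum in \eqref{alphaBB} may be attained by either $\alpha_k^{BB}$ or $\alpha_k^{stab}$, so one has to confirm that both candidates fit inside $[1/\Lambda_2,\,1/\Lambda_1]$ before invoking a uniform spectral bound. Beyond that, the proof is a single operator-norm estimate applied with different information about $\alpha_k$ in each regime.
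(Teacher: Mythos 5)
Your proposal is correct and follows essentially the same route as the paper: write $g_{k+1}=(I-\alpha_k H_k)g_k$ with the averaged Hessian $H_k$, then bound $\|I-\alpha_k H_k\|$ by a spectral estimate, using $\alpha_k=\alpha_k^{stab}<1/\Lambda_2$ on $\Omega_3$ to get the factor $q_k$ and $\alpha_k\in[1/\Lambda_2,1/\Lambda_1]$ on $\Omega_{1,2}$ to get $\varkappa-1<\varkappa$. Your explicit verification that $\alpha_k^{stab}\ge 1/\Lambda_2$ on $\Omega_2$ is a slightly more careful rendering of a step the paper passes over quickly, but it is the same argument.
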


\begin{proof}
Using Assumption A1, we get
$$
g_{k+1} = g_k - \alpha_k H_k g_k,
$$
where the matrix $H_k = \int_0^1 \nabla^2 f(x_k + t s_k) dt$ is symmetric, and it fulfills the relations
$$
\Lambda_1 I \preceq H_k \preceq \Lambda_2 I.
$$
Clearly,
\begin{equation}\label{norm_g_k_2}
\|g_{k+1}\| \le \|I - \alpha_k H_k\| \|g_k\|.
\end{equation}

Consider, first, the case when $x_k \in \Omega_3$. Using the inequality $\Lambda_2 \Delta < \|g(x)\|$ and relations \eqref{x_k_alpha_k}, we can derive for \eqref{norm_g_k_2} the following upper bound
$$
\|I - \alpha_k^{stab} H_k\|
= \max_{\|v\|=1} \left| 1 - \alpha_k^{stab} v^T H_k v\right|
= 1 - \alpha_k^{stab} \min_{\|v\|=1} v^T H_k v
\le 1 - \frac{\Lambda_1 \Delta}{\|g_k\|}.
$$
This proves the upper inequality in \eqref{norm_g_k}.

Suppose now that $x_k \in \Omega_{1,2}$, i.e., $\|g_k\| \le \Lambda_2 \Delta$. Then, using \eqref{alpha_k_bound}, we get the bounds
$\Lambda_2^{-1} \le \alpha_k \le \Lambda_1^{-1}$, which together with the inequalities
$\Lambda_1 \le \|H_k\| \le \Lambda_2$ yield
$$
\|I - \alpha_k H_k\| \le \max \{1 - \varkappa^{-1}, \varkappa - 1\} = \varkappa -1 < \varkappa.
$$
By combining this estimate with \eqref{norm_g_k_2}, we finally prove the lower inequality in \eqref{norm_g_k}.
\hfill $\Box$ \end{proof}

Lemma~\ref{lem:g_k} implies that the stabilization steps have the following properties
\begin{equation}\label{q_in_01}
q_k \in (0,1), \quad \forall x_k \in \Omega_3,
\end{equation}
\begin{equation}\label{q_decrease}
q_{k+1} < q_k, \quad \forall x_k, x_{k+1}\in \Omega_3.
\end{equation}

Next, we prove that, after a finite number of iterations, all iterates belong to the bounded set $\Omega_{1,2,3'}$.
\begin{lemma}\label{lem:finite_step}
For any $x_0, x_1 \in R^n$ and $\Delta > 0$, there exists an integer $K \ge 1$ such that the inequality
\begin{equation}\label{bounded_g}
\|g_k\| \le \varkappa \Lambda_2 \Delta
\end{equation}
holds, that is $x_k \in \Omega_{1,2,3'}$, for all $k \ge K$. Moreover, $K$ is the iteration number corresponding to the first iterate $x_K$ that belongs to $\Omega_{1,2,3'}$.
\end{lemma}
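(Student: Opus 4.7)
The plan is to establish two complementary facts: (i) the set $\Omega_{1,2,3'}$ is forward-invariant under the BBstab iteration, and (ii) at every iteration with $x_k$ outside $\Omega_{1,2,3'}$, the gradient norm strictly decreases by a fixed positive amount. Together these imply that the iterates must enter $\Omega_{1,2,3'}$ in finitely many steps and remain there forever, so the desired $K$ is precisely the first entry time.

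For the invariance step, I would split according to which piece of $\Omega_{1,2,3'}$ contains $x_k$ and invoke Lemma~\ref{lem:g_k}. If $x_k \in \Omega_{1,2}$, then $\|g_k\| \le \Lambda_2 \Delta$ and the second branch of \eqref{norm_g_k} gives $\|g_{k+1}\| \le \varkappa \|g_k\| \le \varkappa \Lambda_2 \Delta$. If instead $x_k \in \Omega_{3'} \subset \Omega_3$, the first branch, combined with $q_k \in (0,1)$ (property \eqref{q_in_01}), yields $\|g_{k+1}\| < \|g_k\| \le \varkappa \Lambda_2 \Delta$. In either case $x_{k+1} \in \Omega_{1,2,3'}$, so invariance holds.

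For the entry step, suppose $x_k \notin \Omega_{1,2,3'}$, i.e.\ $\|g_k\| > \varkappa \Lambda_2 \Delta > \Lambda_2 \Delta$, so $x_k \in \Omega_3$. Lemma~\ref{lem:g_k} then simplifies to $\|g_{k+1}\| \le \|g_k\| - \Lambda_1 \Delta$, i.e.\ a strict drop by the fixed positive quantity $\Lambda_1 \Delta$. Since $\|g_k\| \ge 0$ for all $k$, this can happen only finitely many times (roughly $\lceil (\|g_1\| - \varkappa \Lambda_2 \Delta)/(\Lambda_1 \Delta)\rceil$ consecutive iterations at most) before \eqref{bounded_g} is satisfied for the first time. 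Letting $K$ be that first entry index and applying the invariance step inductively for all $k \ge K$ completes the proof. I do not anticipate any real obstacle; the only point worth double-checking is that the threshold $\varkappa \Lambda_2 \Delta$ in the definition of $\Omega_{3'}$ is exactly what is needed to close the loop, since the expansion factor $\varkappa$ in the non-stabilization branch of \eqref{norm_g_k}, multiplied by the upper bound $\Lambda_2 \Delta$ available on $\Omega_{1,2}$, is precisely $\varkappa \Lambda_2 \Delta$.
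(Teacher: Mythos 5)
Your proposal is correct and follows essentially the same two-step structure as the paper's proof: forward-invariance of $\Omega_{1,2,3'}$ via the two branches of Lemma~\ref{lem:g_k}, followed by a finite-entry argument for iterates starting in $\Omega_3 \setminus \Omega_{3'}$. Your entry step is in fact slightly more explicit than the paper's (which only cites \eqref{q_in_01}--\eqref{q_decrease}), since you observe that $q_k\|g_k\| = \|g_k\| - \Lambda_1\Delta$ gives a fixed decrement per stabilization step and hence an explicit bound on $K$.
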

\begin{proof}
Notice that \eqref{bounded_g} is satisfied if and only if $x_k \in \Omega_{1,2,3'}$.
We first show that if $x_k \in \Omega_{1,2,3'}$, then so does the next iterate. Indeed, in view of \eqref{norm_g_k} and \eqref{q_in_01}, if $x_k \in \Omega_{3'}$, then $x_{k+1} \in \Omega_{1,2,3'}$. On the other hand, if $x_k \in \Omega_{1,2}$, i.e. $\|g_k\| \le \Lambda_2 \Delta$, then, by Lemma~\ref{lem:g_k}, we have $\|g_{k+1}\| \le \varkappa \Lambda_2 \Delta$.
	
Suppose now that $x_1 \in \Omega_{3} \setminus \Omega_{3'}$. Then it immediately follows from relations \eqref{q_in_01} and \eqref{q_decrease}, that there exists $K > 1$ such that $x_K \in \Omega_{1,2,3'}$. As it was shown above, this means that $x_k \in \Omega_{1,2,3'}$ for all $k \ge K$.
\hfill $\Box$ \end{proof}

It follows from \eqref{q_in_01} that, when iterates belong to the set $\Omega_3$, the value $\|g_k\|$ monotonically decreases as indicated by \eqref{norm_g_k}. Furthermore, the actual decrease may speed-up in accordance with \eqref{q_decrease}. When the iterates reach $\Omega_{1,2}$, the decrease is naturally expected to slow down, and this is followed by a non-monotonic behavior of $\|g_k\|$, which is a typical feature of the BB steps.
\begin{figure}[t!]
	\centering
	\vspace{-6.7cm}
	\includegraphics[width=0.99\textwidth]{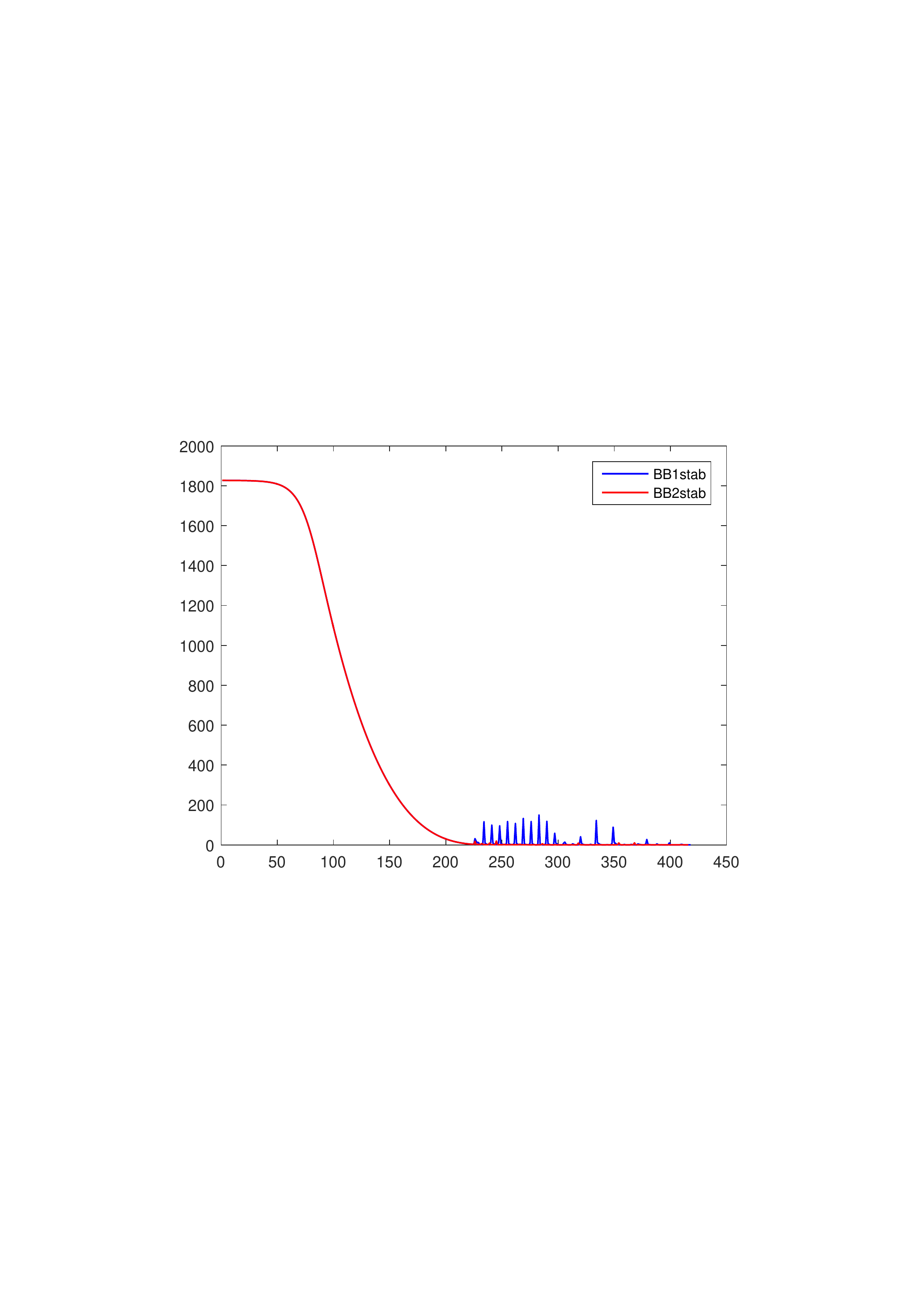}
	\vspace{-6.3cm}
	\caption{Graphs of $\|g_k\|$ for BB1stab and BB2stab with $\Delta = 2$ for Raydan function \eqref{raydan}.}\label{fig:Raydan_stab_g}
\end{figure}
One can observe all these stages in the behavior of BBstab in Figure \ref{fig:Raydan_stab_g}. It presents changes of $\|g_k\|$ with $k$ in the process of minimizing Raydan function \eqref{raydan}. Details of these runs are discussed in Section \ref{sec:numres}. Note that both BB1 and BB2 fail to solve this problem starting from the same points. The figure illustrates the role of stabilization in providing convergence of BBstab. One can clearly recognize the first stage of the process when the stabilization steps ensure a monotonic decrease of $\|g_k\|$. For the BB1stab and BB2stab, the iteration when the standard BB step was used for the first time is 228 and 226, respectively. For them, the last stabilization step was used in iteration 379 and 353, respectively. Observe that the spikes of $\|g_k\|$ produced by BB1 is much larger than those for BB2.

Lemma~\ref{lem:finite_step} allows us to deduce
% that, if the BB method
an interesting property of the BB method, namely, that if it
generates bounded steps, it cannot generate unbounded iterates because one can choose a sufficiently large $\Delta$, which is not binding. The same lemma
indicates that a proper choice of $\Delta $ allows for BBstab to reach any neighborhood of $x^*$. We use the notation
$$
B_{\delta }(x^*) = \{x \in R^n: \ \|x - x^*\| \le \delta \}.
$$
in the following formulation of this useful feature of BBstab.

\begin{lemma}\label{lem:stab_convergence}
Let $x_0, x_1 \in R^n$ be any starting points. Then for any $\delta > 0$ and positive $\Delta \le  \frac{\delta}{\varkappa^2}$, there exists $K(\Delta) \ge 1$ such that the iterates $\{x_k\}$ satisfy the condition
$$
x_k \in B_{\delta }(x^*), \quad \forall k \ge K(\Delta).
$$
\end{lemma}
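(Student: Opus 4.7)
The plan is to derive this as an immediate corollary of Lemma~\ref{lem:finite_step} combined with the strong convexity estimate \eqref{g_x}. Lemma~\ref{lem:finite_step} already furnishes an integer $K$ after which every iterate lies in $\Omega_{1,2,3'}$, i.e.\ satisfies $\|g_k\| \le \varkappa \Lambda_2 \Delta$. The only remaining task is to convert this bound on $\|g_k\|$ into a bound on $\|x_k - x^*\|$ of size at most $\delta$, using the hypothesis $\Delta \le \delta/\varkappa^2$.

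First I would set $K(\Delta)$ equal to the integer $K$ supplied by Lemma~\ref{lem:finite_step} (which depends on $\Delta$ and on the starting points). For any $k \ge K(\Delta)$, the iterate $x_k$ lies in $\Omega_{1,2,3'}$, hence $\|g_k\| \le \varkappa \Lambda_2 \Delta$. Applying the lower estimate in \eqref{g_x}, namely $\Lambda_1 \|x - x^*\| \le \|g(x)\|$, I obtain
\[
\|x_k - x^*\| \;\le\; \frac{\|g_k\|}{\Lambda_1} \;\le\; \frac{\varkappa \Lambda_2 \Delta}{\Lambda_1} \;=\; \varkappa^2 \Delta.
\]
The hypothesis $\Delta \le \delta/\varkappa^2$ then yields $\|x_k - x^*\| \le \delta$, which is precisely the required inclusion $x_k \in B_\delta(x^*)$.

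There is no real obstacle here: the nontrivial content has already been established in Lemma~\ref{lem:finite_step}, and the present statement is essentially a quantitative repackaging that converts the qualitative fact \emph{"eventually $\|g_k\|$ is bounded by $\varkappa\Lambda_2\Delta$"} into the usable form \emph{"iterates can be driven into any prescribed neighborhood of $x^*$ by shrinking $\Delta$."} The only care needed is in the constants: the condition number $\varkappa = \Lambda_2/\Lambda_1$ enters twice, once when bounding $\|g_k\|$ in $\Omega_{3'}$ and once when converting $\|g_k\|$ into $\|x_k-x^*\|$, which is exactly why the hypothesis on $\Delta$ carries a factor $\varkappa^2$ rather than $\varkappa$.
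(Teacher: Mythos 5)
Your proposal is correct and follows exactly the paper's own argument: invoke Lemma~\ref{lem:finite_step} to get $\|g_k\|\le\varkappa\Lambda_2\Delta$ for $k\ge K$, then apply the lower bound in \eqref{g_x} to conclude $\|x_k-x^*\|\le\varkappa^2\Delta\le\delta$. No differences worth noting.
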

\begin{proof}
Combining \eqref{g_x} and Lemma~\ref{lem:finite_step}, we get the relations
$$
\|x_k - x^*\| \le \frac{\|g_k\|}{\Lambda_1}
\le \varkappa^2 \Delta \le \delta,
$$
which are satisfied for all sufficiently large $k$. This completes the proof.
\hfill $\Box$ \end{proof}

We shall make use of Lemma~\ref{lem:finite_step} for proving global convergence result for BBstab. We show also that its local rate of convergence is R-linear, which means that there exist positive $\gamma$ and $c \in (0,1)$ such that
\begin{equation}\label{R-linear}
\|x_{k+1} - x^*\| \le \gamma c^k \|x_{1} - x^*\|.
\end{equation}
These convergence results are based on our convergence analysis presented in the next sub-section for convex quadratic functions.

\subsection{Convergence in Quadratic Case}
\label{sec:quad}
In this sub-section, we focus on minimizing convex quadratic functions of the form
\begin{equation}\label{quadratic}
f(x) = \dfrac{1}{2}x^TAx - b^Tx,
\end{equation}
where the matrix $A \in R^{n \times n}$ is positive definite, and $b \in R^n$.
%%For these functions, we have the following convergence result.
For these functions, we derive the convergence with R-linear rate. To this end, we
% introduce
will make use of the following property which is the same as Property A in \cite{dai2003}.

\begin{definition}
We say that the choice of the stepsize in \eqref{eqitr} has property \textbf{P} if there exist an integer $m$ and positive constants $M_1$ and $M_2$ such that, for all $k \ge 1$, \\
(i) $\Lambda_1\le\alpha_{k}^{-1}\le M_1$;\\
(ii) for any integer $\ell\in[1,\,n-1]$ and real number $\epsilon>0$, if $R(k-j,\,\ell)\le \epsilon$ and $(g_{k-j}^{(\ell+1)})^2\ge M_2\epsilon$ hold for $j\in[0,\,\min\{k,\,m\}-1]$, then $\alpha_{k}^{-1}\ge\frac{2}{3}\lambda_{\ell+1}$.
\end{definition}

\begin{theorem}\label{th:conv_quad}
Let $x_0, x_1 \in R^n$ be arbitrary starting points. Then for any $\Delta > 0$, the sequence $\{x_k\}$ converges to $x^*$ with {\rm R}-linear rate. Moreover, there exists a positive integer $\bar{j}$, such that, for any $\Delta > 0$,
% new:
$x_0 \in R^n$
and $x_1 \in \Omega_{1,2,3'}$, the inequality
$$
\|g_{k+\bar{j}}\| \le \frac{1}{2}\|g_k\|
$$
holds for all $k \ge 1$.
\end{theorem}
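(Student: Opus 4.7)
My plan is to reduce the analysis to the framework of Dai \cite{dai2003} by verifying that the BBstab stepsize rule possesses Property~P, and then to use Lemma~\ref{lem:finite_step} to absorb the initial phase during which iterates may lie outside the set $\Omega_{1,2,3'}$.

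As a first step, I would invoke Lemma~\ref{lem:finite_step} to fix an index $K \ge 1$ after which $x_k \in \Omega_{1,2,3'}$ for every $k \ge K$. The two-sided bound \eqref{alpha_large_k} then gives
$$
\Lambda_1 \le \alpha_k^{-1} \le \varkappa \Lambda_2, \qquad \forall k \ge K,
$$
which is item~(i) of Property~P with $M_1 = \varkappa \Lambda_2$. For item~(ii), the decisive remark is that the stabilization can only \emph{shrink} the stepsize: by construction $\alpha_k \le \alpha_k^{BB}$, hence $\alpha_k^{-1} \ge (\alpha_k^{BB})^{-1}$. Since both $\alpha_k^{BB1}$ and $\alpha_k^{BB2}$ are known to satisfy Property~P (equivalently Property~A) for convex quadratic objectives, see \cite{dai2003}, item~(ii) is automatically inherited by the clipped stepsize $\alpha_k = \min\{\alpha_k^{BB}, \alpha_k^{stab}\}$ with the same constants $m$ and $M_2$: the conclusion $\alpha_k^{-1} \ge \tfrac{2}{3}\lambda_{\ell+1}$ is only reinforced by taking the minimum.

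With Property~P in force for $k \ge K$, I would invoke the R-linear convergence theorem of \cite{dai2003}: every gradient method on a strongly convex quadratic whose stepsize has Property~P admits a positive integer $\bar{j}$ such that $\|g_{k+\bar{j}}\| \le \tfrac{1}{2}\|g_k\|$ for all indices $k$ throughout which Property~P holds. Because the iterates do not leave $\Omega_{1,2,3'}$ once they have entered it (Lemma~\ref{lem:finite_step}), the halving inequality is valid for all $k \ge K$. Iterating it and combining with the equivalence \eqref{g_x} between $\|g(x)\|$ and $\|x-x^*\|$ yields the R-linear bound \eqref{R-linear}; the finitely many initial iterations are absorbed into the constant $\gamma$. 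The sharper statement in the second half of the theorem corresponds to the case $K = 1$: the assumption $x_1 \in \Omega_{1,2,3'}$, together with Lemma~\ref{lem:finite_step}, guarantees that the whole trajectory stays in $\Omega_{1,2,3'}$ from the outset, so the halving estimate applies for every $k \ge 1$.

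The main obstacle I anticipate is a clean verification of item~(ii) of Property~P. Its hypothesis refers to gradient components in the eigenbasis of $A$ and a look-back over $m$ previous iterations, so one has to make sure that the inequality $\alpha_k^{-1} \ge (\alpha_k^{BB})^{-1}$ really suffices to transfer the property from the unstabilized BB stepsize to the stabilized one, without any compatibility issue between the two indices. Once this truncation argument is in place, the rest of the proof is bookkeeping: apply \eqref{alpha_large_k} for item~(i), cite \cite{dai2003} for the BB-side Property~P, and quote its R-linear convergence theorem to conclude.
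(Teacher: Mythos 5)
Your proposal is correct and follows essentially the same route as the paper: verify that the BBstab stepsize has Property P (item (i) via Lemma~\ref{lem:finite_step} and the bounds \eqref{alpha_large_k} with $M_1=\varkappa\Lambda_2$, item (ii) for the BB stepsizes) and then invoke Theorem~4.1 of \cite{dai2003}. The only difference is cosmetic: you obtain item (ii) by the observation that clipping can only increase $\alpha_k^{-1}$ so the property is inherited from $\alpha_k^{BB}$, whereas the paper redoes the Rayleigh-quotient computation explicitly for BB1 and BB2; your shortcut is valid because on a quadratic $(\alpha_k^{BB})^{-1}$ depends only on $g_{k-1}$ along whichever trajectory is generated, which resolves the index-compatibility worry you raise.
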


\begin{proof} It is well known that the BB method is invariant under orthogonal transformation of the variables and, as it can be easily seen, so does its stabilized version.
Hence, we can assume without loss of generality that the matrix $A$ is of the form
\begin{equation}\label{eigen}
A = {\rm diag}(\lambda_1,\lambda_2,\ldots,\lambda_n),
\end{equation}
where $0 < \Lambda_1 = \lambda_1 < \lambda_2 < \ldots < \lambda_n = \Lambda_2$. Here, like it is often done for the gradient methods (see, e.g., \cite{raydan1993Barzilai}), it is assumed without loss of generality that the matrix $A$ has distinct eigenvalues. Then denoting the $i$-th component of $g_k$ by $g_k^{(i)}$, we have
\begin{equation}\label{i-th_g_k}
g_{k+1}^{(i)} = (1 - \alpha_k \lambda_i)g_k^{(i)},
\quad  i = 1, 2, \ldots , n.
\end{equation}
We will also make use of the following
% definition, where
notation:
$$
R(k,\,\ell)=\sum\limits_{i=1}^{\ell}(g_k^{(i)})^2.
$$

Firstly, we prove that the step size $\alpha_{k}$ has property \textbf{P}.  Lemma~\ref{lem:finite_step} ensures that $x_k \in \Omega_{1,2,3'}$ for all $k \ge 1$. Then the bounds
\eqref{alpha_large_k} show that $\alpha_k$ has property \textbf{P}(i) with $M_1=\varkappa \Lambda_2$.

Next, we will show, for any integer $\ell\in[1,\,n-1]$ and real number $\epsilon>0$, that the inequality $\alpha_{k}^{-1}\ge\frac{2}{3}\lambda_{\ell+1}$ is satisfied, whenever $R(k-1,\,\ell)\le \epsilon$ and $(g_{k-1}^{(\ell+1)})^2\ge 2\epsilon$.
This will be done separately for BB1- and BB2-based iterates.

For the BB1 case, we have
\begin{eqnarray*}
\alpha_k^{-1} &\ge& \frac{  g_{k-1}^T Ag_{k-1} }{  \|g_{k-1}\|^2 }
=\frac{ \sum\limits_{i=1}^{n} \lambda_i(g_{k-1}^{(i)})^2 }{  R(k-1,\,n )}
\ge\frac{ \lambda_{\ell+1}\sum\limits_{i=\ell+1}^{n}  (g_{k-1}^{(i)})^2 }{  R(k-1,\,\ell)+\sum\limits_{i=\ell+1}^{n} (g_{k-1}^{(i)})^2}\\
&\ge&\frac{ \lambda_{\ell+1}\sum\limits_{i=\ell+1}^{n}  (g_{k-1}^{(i)})^2 }{  \epsilon+\sum\limits_{i=\ell+1}^{n} (g_{k-1}^{(i)})^2}
\ge\frac{ 2\lambda_{\ell+1}\epsilon }{  \epsilon+2\epsilon}
=\frac{ 2}{  3}\lambda_{\ell+1}.
\end{eqnarray*}

For BB2, we obtain
\begin{eqnarray*}
(\alpha_k)^{-1} &\ge& \frac{ g_{k-1}^TA^2g_{k-1}  }{ g_{k-1}^TAg_{k-1} }
\ge\frac{ \lambda_{\ell+1}\sum\limits_{i=\ell+1}^{n} \lambda_{i} (g_{k-1}^{(i)})^2 }{ \lambda_{\ell+1} R(k-1,\,\ell)+\sum\limits_{i=\ell+1}^{n} \lambda_{i}(g_{k-1}^{(i)})^2}\\
&\ge& \frac{ \lambda_{\ell+1}^2 (g_{k-1}^{(\ell+1)})^2 }{ \lambda_{\ell+1} R(k-1,\,\ell)+ \lambda_{\ell+1}(g_{k-1}^{(\ell+1)})^2}
\ge\frac{ 2\lambda_{\ell+1}\epsilon }{  \epsilon+2\epsilon}
=\frac{ 2}{  3}\lambda_{\ell+1}.
\end{eqnarray*}
Thus, \textbf{P}(ii) holds for $m=2$ and $M_2=2$. This implies that BBstab stepsize $\alpha_k$ satisfies \textbf{P}. Then we can skip the rest of the proof because it is similar to the proof of Theorem 4.1 in \cite{dai2003}.
\hfill$\Box$\end{proof}

It should be emphasized that, in this lemma, the value of $\bar{j}$ depends only on $\Lambda_1$ and $\Lambda_2$.

\subsection{Convergence in General Case}
\label{sec:general}
For nonquadratic functions, we shall first prove local R-linear convergence of BBstab. This result will then be used for showing that it converges from any starting point.

Throughout this sub-section, we need to additionally assume that the Hessian matrix $\nabla^2 f(x)$ is Lipschitz-continuous at $x^*$. In what follows, we use the notation $H = \nabla^2 f(x^*)$.
\begin{description}
	\item[A2.]
	There exist a radius $\rho > 0$ and a Lipschitz constant $L \ge 0$ such that
	$$
	\|\nabla^2 f(x) - H\| \le L \|x - x^*\|, \quad
	\forall x \in B_{\rho} (x^*).
	$$
\end{description}

This assumption implies that
\begin{equation}\label{L}
\|g(x) - H(x - x^*)\| \le \dfrac{L}{2} \|x - x^*\|^2,
\quad \forall x \in B_{\rho} (x^*).
\end{equation}

The second-order Taylor approximation to $f$ around $x^*$ is given by the quadratic function
\begin{equation}\label{nona2}
  \hat{f}(x)=f(x^*)+\frac{1}{2}(x-x^*)^TH(x-x^*).
\end{equation}
Define new iterates $\hat{x}_{k,j}$ associated with $\hat{f}$ as follows:
\begin{equation}\label{nona3}
  \left\{
    \begin{array}{l}
     \hat{x}_{k,0}=x_k,\\
     \hat{x}_{k,j+1}=\hat{x}_{k,j}-\hat{\alpha}_{k,j}\hat{g}_{k,j},\quad j\ge0,
    \end{array}
  \right.
\end{equation}
where
\begin{equation*}
   \hat{\alpha}_{k,j} =
   \left\{
    \begin{array}{ll}
     \alpha_k,&{\rm if}~j=0,\\
     \min\{\hat{\alpha}_{k,j}^{BB},\,\hat{\alpha}_{k,j}^{stab}\},&{\rm otherwise}.
    \end{array}
  \right.
\end{equation*}
Here $\hat{\alpha}_{k,j}^{BB} = \hat{\alpha}_{k,j}^{BB1}$ or
$\hat{\alpha}_{k,j}^{BB} = \hat{\alpha}_{k,j}^{BB2}$ and
$\hat{\alpha}_{k,j}^{stab} = \frac{\Delta}{\|\hat{g}_{k,j}\|}$ with
\begin{eqnarray*}
\hat{\alpha}_{k,j}^{BB1}=\dfrac{ \hat{s}_{k+j-1}^T\hat{s}_{k+j-1} }{ \hat{s}_{k+j-1}^T\hat{y}_{k+j-1}},
&&
\hat{\alpha}_{k,j}^{BB2}=\dfrac{ \hat{s}_{k+j-1}^T\hat{y}_{k+j-1}}{ \hat{y}_{k+j-1}^T\hat{y}_{k+j-1} },
\end{eqnarray*}
$\hat{s}_{k+j-1}=\hat{x}_{k,j}-\hat{x}_{k,j-1}$, $\hat{g}_{k,j}=H(\hat{x}_{k,j}-x^*)$ and $\hat{y}_{k+j-1}=\hat{g}_{k,j}-\hat{g}_{k,j-1}$.
In what follows, whenever we mention $\hat{x}_{k,j}$ and $\hat{\alpha}_{k,j}$, they are assumed to be generated as defined above.

The next result follows immediately from Theorem~\ref{th:conv_quad}.
%%Lemma~\ref{lem:half_norm}.

\begin{lemma}\label{lem:linear_R_rate}
Let $\Delta > 0$ be any scalar, such that $\Omega_{1,2,3'} \subseteq B_{\rho}(x^*)$.
Then there exists a positive integer $\bar{j}$, dependent only on $\Lambda_1$ and $\Lambda_2$, such that, for any $x_{k-1} \in R^n$ and $x_k\in\Omega_{1,2,3'}$, the inequality holds
$$\| \hat{g}_{k,\bar{j}} \| \le \dfrac{1}{2} \| \hat{g}_{k,0} \|.$$
\end{lemma}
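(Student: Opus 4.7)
My approach would be to realize that the auxiliary sequence $\{\hat x_{k,j}\}_{j\ge 0}$ is itself a BBstab run, carried out on the convex quadratic $\hat f$ defined in \eqref{nona2}, with initial pair $(\hat x_{k,0},\hat x_{k,1})$. Indeed, for every $j\ge 1$ the stepsize $\hat\alpha_{k,j}$ is by construction the BBstab stepsize for $\hat f$ computed from $\hat s_{k+j-1}$ and $\hat y_{k+j-1}$, and $\hat g_{k,j}=H(\hat x_{k,j}-x^*)$ is precisely $\nabla\hat f(\hat x_{k,j})$. Since $H=\nabla^2 f(x^*)$ satisfies $\Lambda_1 I\preceq H\preceq \Lambda_2 I$ by Assumption A1, the quadratic $\hat f$ inherits A1 with the same constants $\Lambda_1,\Lambda_2$, so Theorem~\ref{th:conv_quad} is applicable to $\{\hat x_{k,j}\}$.

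The first technical step is to record two uniform bounds. Using $x_k\in\Omega_{1,2,3'}$ and \eqref{g_x}, one gets $\|\hat g_{k,0}\|\le \Lambda_2\|x_k-x^*\|\le \varkappa^2\Lambda_2\Delta$. Using the identity $\hat g_{k,1}=(I-\alpha_k H)\hat g_{k,0}$ together with the range \eqref{alpha_large_k} for $\alpha_k$, a routine spectral estimate yields $\|I-\alpha_k H\|\le \varkappa$, and hence $\|\hat g_{k,1}\|\le \varkappa\|\hat g_{k,0}\|\le \varkappa^3\Lambda_2\Delta$.

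Next I would apply Lemma~\ref{lem:finite_step} to $\{\hat x_{k,j}\}$. Starting from the bound $\|\hat g_{k,1}\|\le \varkappa^3\Lambda_2\Delta$ and using Lemma~\ref{lem:g_k}, every stabilization step in $\Omega_3$ cuts the gradient norm by at least $\Lambda_1\Delta$, so there is an entry time $K_0$ into $\Omega_{1,2,3'}$ (for $\hat f$) bounded by a function of $\varkappa$ alone. From that point on, Theorem~\ref{th:conv_quad} supplies a universal $\bar j'$ depending only on $\Lambda_1,\Lambda_2$ with $\|\hat g_{k,j+\bar j'}\|\le \frac{1}{2}\|\hat g_{k,j}\|$ for every $j\ge K_0$. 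Iterating this halving $t=\lceil\log_2(2\varkappa)\rceil$ times then absorbs the potential factor-$\varkappa$ growth between $\hat g_{k,0}$ and $\hat g_{k,K_0}$, and setting $\bar j:=K_0+t\bar j'$ yields the desired inequality with $\bar j$ depending only on $\Lambda_1$ and $\Lambda_2$.

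The hardest part is exactly this off-by-one accounting: Theorem~\ref{th:conv_quad} anchors its halving at $k\ge 1$, while the lemma needs a comparison anchored at $j=0$, and the first transition within $\{\hat x_{k,j}\}$ uses the imported stepsize $\alpha_k$ rather than a BB stepsize for $\hat f$. The role of Lemmas~\ref{lem:g_k} and \ref{lem:finite_step} is precisely to make this one-step slack harmless, and the remaining bookkeeping amounts to verifying that every intermediate bound scales with $\Delta$ in the same way that $\Omega_{1,2,3'}$ does, so that the final $\bar j$ depends on nothing beyond $\Lambda_1$ and $\Lambda_2$.
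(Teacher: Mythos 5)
Your proposal is correct and follows the paper's own route: the paper offers no argument beyond the remark that the lemma ``follows immediately from Theorem~\ref{th:conv_quad}'', and what you write is exactly the bookkeeping needed to make that reduction rigorous --- viewing $\{\hat{x}_{k,j}\}$ as a BBstab run on the quadratic $\hat{f}$, handling the imported first stepsize $\hat{\alpha}_{k,0}=\alpha_k$ separately through the spectral bound $\|I-\alpha_k H\|\le\varkappa$, and re-anchoring the halving of Theorem~\ref{th:conv_quad} at the entry time into $\Omega_{1,2,3'}$. The one touch-up needed is in the last step: since $K_0$ may vary with $x_k$, set $\bar{j}:=\bar{K}_0+t\bar{j}'$ with $\bar{K}_0$ your uniform (in $\varkappa$) bound on $K_0$ and $t$ enlarged to absorb the at-most-$\varkappa^{\bar{j}'}$ slack between $\|\hat{g}_{k,0}\|$ and $\|\hat{g}_{k,K_0+r}\|$, $0\le r<\bar{j}'$, so that $\bar{j}$ is genuinely independent of the particular iterate.
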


It can be easily seen that if $x_k \in \Omega_{1,2,3'}$, then all corresponding $\hat{x}_{k,j} \in \Omega_{1,2,3'}$. In this case, BBstab stepsize $\alpha_k$ satisfies the bounds \eqref{alpha_large_k}, and similarly for
$\hat{\alpha}_{k,j}$, we have the bounds
\begin{equation}\label{hat_alpha_large_k}
\frac{1}{\varkappa \Lambda_2} \le \hat{\alpha}_{k,j} \le \frac{1}{\Lambda_1},
\quad \forall j \ge 0.
\end{equation}

The following result will be used for proving local R-linear convergence.
\begin{lemma}\label{lem:relation}
Let integer $\bar{j} \ge 1$ be arbitrary.
Then there exist positive scalars $\bar{\Delta}$ and $\gamma$ with the following property:
for any $\Delta\in(\,0,\, \bar{\Delta}\,]$, $x_{k-1} \in R^n$, $x_k\in \Omega_{1,2,3'}\subset B_{\rho}(x^*)$ and
$m\in[0,~\bar{j}]$, for which
\begin{equation}\label{nona9}
  \|\hat{g}_{k,j}\| \ge\frac{1}{2}\|\hat{g}_{k,0}\|, \quad\forall j\in[0,\max\{0, m-1\}],
\end{equation}
we have the inequality
\begin{equation}\label{nona10}
  \|x_{k+j}-\hat{x}_{k,j}\| \le \gamma\|x_k-x^*\| ^2
\end{equation}
satisfied for all $j\in[0, m]$.
\end{lemma}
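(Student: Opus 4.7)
The plan is an induction on $j\in\{0,1,\ldots,m\}$. The base case $j=0$ is immediate since $\hat{x}_{k,0}=x_k$. For the inductive step, I would subtract the two update rules and substitute $\hat{g}_{k,j}=H(\hat{x}_{k,j}-x^*)$ together with $g_{k+j}=H(x_{k+j}-x^*)+e_{k+j}$, where A2 (via \eqref{L}) gives $\|e_{k+j}\|\le\tfrac{L}{2}\|x_{k+j}-x^*\|^2$ once $\bar{\Delta}$ is chosen small enough that $\Omega_{1,2,3'}\subseteq B_{\rho}(x^*)$ (which is possible by Lemma~\ref{lem:stab_convergence}). This produces the recursion
\begin{equation*}
x_{k+j+1}-\hat{x}_{k,j+1}
=(I-\alpha_{k+j}H)(x_{k+j}-\hat{x}_{k,j})
+(\hat{\alpha}_{k,j}-\alpha_{k+j})\hat{g}_{k,j}
-\alpha_{k+j}\,e_{k+j}.
\end{equation*}
The first term is bounded by $\tau\|x_{k+j}-\hat{x}_{k,j}\|$ for some fixed $\tau$, using \eqref{alpha_large_k} and the spectral bounds on $H$; the third is $O(\|x_{k+j}-x^*\|^2)$ and hence $O(\|x_k-x^*\|^2)$, because $\|x_{k+j}-x^*\|\le c\|x_k-x^*\|$ uniformly for $j\le\bar{j}$ (the linearized map with a BB-bounded stepsize only distorts distances by a bounded factor over finitely many steps).

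The heart of the argument is the bound on $|\alpha_{k+j}-\hat{\alpha}_{k,j}|$: multiplied by $\|\hat{g}_{k,j}\|\le\Lambda_2\|\hat{x}_{k,j}-x^*\|=O(\|x_k-x^*\|)$, it must deliver an $O(\|x_k-x^*\|^2)$ contribution. For $j=0$ the difference vanishes by the very definition of $\hat{\alpha}_{k,0}$. For $j\ge1$ both stepsizes are minima of a BB term and a stab term, and $|\min\{a,b\}-\min\{c,d\}|\le|a-c|+|b-d|$ reduces the task to estimating the two pieces separately. The stab piece is controlled by the identity $\hat{\alpha}_{k,j}^{stab}-\alpha_{k+j}^{stab}=\Delta(\|g_{k+j}\|-\|\hat{g}_{k,j}\|)/(\|g_{k+j}\|\|\hat{g}_{k,j}\|)$ together with the lower bound $\|\hat{g}_{k,j}\|\ge\tfrac12\|\hat{g}_{k,0}\|\ge\tfrac{\Lambda_1}{2}\|x_k-x^*\|$ coming from hypothesis \eqref{nona9} and \eqref{g_x}. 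For the BB piece, I would write $y_{k+j-1}=Hs_{k+j-1}+r_{k+j-1}$ with $\|r_{k+j-1}\|\le L\|s_{k+j-1}\|\max\{\|x_{k+j-1}-x^*\|,\|x_{k+j}-x^*\|\}$ via A2, and decompose $s_{k+j-1}=\hat{s}_{k+j-1}+(s_{k+j-1}-\hat{s}_{k+j-1})$, using the induction hypothesis at steps $j-1$ and $j$ to bound the discrepancy by $2\gamma\|x_k-x^*\|^2$. Expanding the BB ratios and using the denominator lower bounds $\hat{s}_{k+j-1}^TH\hat{s}_{k+j-1}\ge\Lambda_1\|\hat{s}_{k+j-1}\|^2$ (and analogously $\hat{y}_{k+j-1}^T\hat{y}_{k+j-1}\ge\Lambda_1^2\|\hat{s}_{k+j-1}\|^2$) yields the clean estimate $|\alpha_{k+j}^{BB}-\hat{\alpha}_{k,j}^{BB}|\le C\|x_k-x^*\|$.

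Unrolling the three-term recursion over $j=0,1,\ldots,m-1$ with $m\le\bar{j}$, the accumulated constant depends only on $\bar{j},\Lambda_1,\Lambda_2,L$ and $\bar{\Delta}$, so $\gamma$ can be chosen large enough (and $\bar{\Delta}$ small enough, so that the perturbation terms remain dominated) to close the induction. The main obstacle, as emphasized, is the BB-stepsize comparison: the BB ratios are quotients of quantities that are themselves $O(\|x_k-x^*\|)$-small, so a usable bound requires keeping the denominators under control. This is precisely where hypothesis \eqref{nona9} enters, because it delivers the lower bound $\|\hat{s}_{k+j-1}\|\ge\tfrac{\Lambda_1}{2\varkappa\Lambda_2}\|x_k-x^*\|$ through \eqref{hat_alpha_large_k}, and it is only under this lower bound that the expansion of the BB ratio around its linearized counterpart leads to the desired first-order bound in $\|x_k-x^*\|$.
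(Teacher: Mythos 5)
Your inductive scheme is the same as the paper's (which itself leans on Lemma 2.2 of Dai et al.\ for the BB-ratio comparison), and your treatment of the linear propagation term, the Taylor remainder, and the BB piece is sound. The gap is in the one place where this lemma genuinely differs from the purely quadratic perturbation argument: the comparison of the two $\min$'s. You reduce $|\alpha_{k+j}-\hat{\alpha}_{k,j}|$ to $|\alpha^{BB}_{k+j}-\hat{\alpha}^{BB}_{k,j}|+|\alpha^{stab}_{k+j}-\hat{\alpha}^{stab}_{k,j}|$ and then bound the stab piece using only the lower bound $\|\hat{g}_{k,j}\|\ge\frac{\Lambda_1}{2}\|x_k-x^*\|$ coming from \eqref{nona9} and \eqref{g_x}. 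That yields
$$
|\alpha^{stab}_{k+j}-\hat{\alpha}^{stab}_{k,j}|
\le \Delta\,\frac{c\|x_k-x^*\|^2}{\|g_{k+j}\|\,\|\hat{g}_{k,j}\|}
\le \Delta\,\frac{c\|x_k-x^*\|^2}{c'\|x_k-x^*\|^2}=c''\Delta ,
$$
which is $O(\Delta)$, not $O(\|x_k-x^*\|)$. Since the only a priori relation is $\|x_k-x^*\|\le\varkappa^2\Delta$, the fixed parameter $\Delta$ may be far larger than $\|x_k-x^*\|$, so this bound does not deliver the required $(\hat{\alpha}_{k,j}-\alpha_{k+j})\hat{g}_{k,j}=O(\|x_k-x^*\|^2)$ and the induction does not close.

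The repair requires a case analysis on which term achieves each $\min$, which is exactly what the paper does. When neither stabilization step is active, the stab difference is irrelevant (the $\min$'s are the BB values on both sides, and your weak bound on $|b-d|$ must not be added in). When a stabilization step \emph{is} active on either side, the activity condition itself supplies the much stronger lower bound $\|g_{k+M+1}\|>\Delta\Lambda_1$ (respectively $\|\hat{g}_{k,M+1}\|>\Delta\Lambda_1$, since $\alpha^{stab}<\alpha^{BB}\le 1/\Lambda_1$), which propagates to the other gradient up to $O(\Delta^2)$ for $\bar{\Delta}$ small. The denominator is then of order $\Delta^2$ rather than $\|x_k-x^*\|^2$, and the estimate becomes $c\|x_k-x^*\|^2/\Delta$; one factor of $\|x_k-x^*\|\le\|g_k\|/\Lambda_1\le\varkappa\Lambda_2\Delta/\Lambda_1$ is then traded for a factor of $\Delta$ to cancel the denominator and obtain $O(\|x_k-x^*\|)$. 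Hypothesis \eqref{nona9} is indeed needed, but for the BB denominators (as you correctly use it), not as a substitute for the activity bound $\|g\|>\Delta\Lambda_1$ in the stab piece.
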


\begin{proof}
Throughout the proof, let $c$ denote a generic positive constant, which may depend on some of fixed constants, such as $\bar{\Delta}$, $\bar{j}$, $\Lambda_1$, $\Lambda_2$ or $L$, but not on
the choice of $\Delta$ or $x_k\in \Omega_{1,2,3'}\subset B_{\rho}(x^*)$.
For brevity, we will use the same notation in all inequalities, even though every specific value of $c$ depends on the one, where it is used. What is important is that the number of these inequalities is finite.

We first notice that, by Lemma~\ref{lem:finite_step}, the relation $x_{k+j} \in \Omega_{1,2,3'} \subseteq B_{\rho}(x^*)$ holds for all $j \ge 0$.
The process of proving \eqref{nona10} will be combined with showing that the inequalities
\begin{equation}\label{nona11}
  \|g(x_{k+j})-\hat{g}(\hat{x}_{k,j})\| \le c\|x_k-x^*\| ^2,
\end{equation}
\begin{equation}\label{nona12}
  \|s_{k+j}\| \le c\|x_k-x^*\| ,
\end{equation}
\begin{equation}\label{nona13}
  |\alpha_{k+j}-\hat{\alpha}_{k,j}|\le c\|x_k-x^*\| ,
\end{equation}
are satisfied for all $j\in[0,\, m]$.

The proof of \eqref{nona10}-\eqref{nona13} is by induction on $ m$. For $ m=0$, noticing that $\hat{x}_{k,0}=x_k$, $\hat{\alpha}_{k,0}=\alpha_k$ and $s_{k}=-\alpha_k g_k$, by \eqref{g_x}, \eqref{alpha_large_k} and \eqref{L}, we can immediately get \eqref{nona10}-\eqref{nona13} satisfied for $j=0$.

Suppose that there exist $M\in[1,\,\bar{j})$ and $\bar{\Delta}>0$ with the property that if \eqref{nona9} holds for any $ m\in[0,\, M-1]$, then \eqref{nona10}-\eqref{nona13} are satisfied for all $j\in[0,\, m]$. Next, we shall show that for a smaller choice of $\bar{\Delta}>0$, we can replace $M$ by $M+1$. Hence, we suppose that \eqref{nona9} holds for all $j\in[0,\,M]$. Since \eqref{nona9} holds for all $j\in[0,\,M-1]$, it follows from the induction hypothesis and \eqref{nona12} that
\begin{equation}\label{nona43}
  \|x_{k+M+1}-x^* \|  \le \|x_{k}-x^* \| +\sum\limits_{i=0}^{M}\|s_{k+i}\|
  \le c\|x_{k}-x^* \| .
\end{equation}
By analogy with the proof of Lemma 2.2 in \cite{dai2006cyclic}, we derive from \eqref{g_x}, \eqref{alpha_large_k}, \eqref{L}, \eqref{hat_alpha_large_k}, \eqref{nona43} and the induction hypothesis that \eqref{nona10}-\eqref{nona12} hold for $j=M+1$. Then we just need to show that
\begin{equation}\label{nona44}
  |\alpha_{k+M+1}-\hat{\alpha}_{k,M+1}|\le c\|x_k-x^*\| .
\end{equation}
It follows from \eqref{g_x} that
$$\| x_k - x^* \| \le \frac{\|g_k\|}{\Lambda_1} \le \varkappa^2 \Delta \le \varkappa^2 \bar{\Delta}. $$
Then by choosing any $\bar{\Delta} < 1 / (2 \gamma \varkappa^3)$, using relations \eqref{convexity}, \eqref{alpha_large_k},  \eqref{hat_alpha_large_k}-\eqref{nona10}, \eqref{nona12} and the same reasoning as in the proof of Lemma 2.2 in \cite{dai2006cyclic}, we obtain
\begin{equation}\label{BBstepbound}
  | \alpha_{k+M+1}^{BB} - \hat{\alpha}_{k, M+1}^{BB} |
  \le c\|x_k-x^*\| .
\end{equation}
In the following, the proof of \eqref{nona44} will be done by separately considering four different cases.

Case I: $ \alpha_{k+M+1}^{BB} \le \alpha_{k+M+1}^{stab} $ and
$ \hat{\alpha}_{k, M+1}^{BB}  \le \hat{\alpha}_{k, M+1}^{stab} $.\\
Then \eqref{BBstepbound} directly leads to
$$
  |\alpha_{k+M+1}-\hat{\alpha}_{k,M+1}| = | \alpha_{k+M+1}^{BB} - \hat{\alpha}_{k, M+1}^{BB} |
  \le c\|x_k-x^*\| .
$$

Case II: $ \alpha_{k+M+1}^{BB} \le \alpha_{k+M+1}^{stab} $ and
$ \hat{\alpha}_{k, M+1}^{BB}  > \hat{\alpha}_{k, M+1}^{stab} $.\\
If $\hat{\alpha}_{k, M+1}^{stab} \ge \alpha_{k+M+1}^{BB}$, then \eqref{BBstepbound} implies
$$
|\alpha_{k+M+1}-\hat{\alpha}_{k,M+1}|
= \hat{\alpha}_{k, M+1}^{stab}- \alpha_{k+M+1}^{BB}
<  \hat{\alpha}_{k, M+1}^{BB} -  \alpha_{k+M+1}^{BB} \le c\|x_k-x^*\|.
$$
Suppose now that $\hat{\alpha}_{k, M+1}^{stab} < \alpha_{k+M+1}^{BB}$. Then we have
\begin{equation}\label{stepsizeinequality}
  |\alpha_{k+M+1}-\hat{\alpha}_{k,M+1}|
=  \alpha_{k+M+1}^{BB} - \hat{\alpha}_{k, M+1}^{stab}
\le  \alpha_{k+M+1}^{stab} - \hat{\alpha}_{k, M+1}^{stab}.
\end{equation}
It follows from \eqref{alpha_large_k} and \eqref{hat_alpha_large_k} that
\begin{equation}\label{nona41}
 \|\hat{g}_{k,M+1}\| = \frac{\Delta}{\hat{\alpha}_{k, M+1}^{stab}}
 > \frac{\Delta}{\hat{\alpha}_{k, M+1}^{BB}}
 \ge \Delta\Lambda_1.
\end{equation}
By \eqref{g_x} and \eqref{L}, we get
$$\|g_{k+M+1} - \hat{g}_{k,M+1}\|
\le \dfrac{L}{2} \|x_{k+M+1} - x^*\|^2
\le \dfrac{L}{2\Lambda_1^2} \|g_{k+M+1}\|^2
\le \dfrac{1}{2}\varkappa^4 \Delta^2 L .$$
This along with \eqref{nona41} leads to
$$
\|g_{k+M+1}\| \ge \|\hat{g}_{k,M+1}\| - \|g_{k+M+1} - \hat{g}_{k,M+1}\|
\ge \Delta\Big(\Lambda_1 - \dfrac{1}{2}\varkappa^4 \Delta  L \Big)
\ge \Delta C(\bar{\Delta}),
$$
where
$C(\bar{\Delta}) = \Lambda_1 - \varkappa^4 \bar{\Delta} L / 2 > 0$ whenever $\bar{\Delta} < 2\Lambda_1 / (\varkappa^4  L)$. Then we obtain
\begin{align*}
|\alpha_{k+M+1}^{stab}-\hat{\alpha}_{k,M+1}^{stab}| &=
\left| \frac{\Delta}{\|g_{k+M+1}\|}-\frac{\Delta}{\|\hat{g}_{k,M+1}\|}\right|
=\Delta \frac{|\|\hat{g}_{k,M+1}\|-\|g_{k+M+1}\||}{\|g_{k+M+1}\|\|\hat{g}_{k,M+1}\|}
\\
&\le\Delta \frac{\|\hat{g}_{k,M+1}-g_{k+M+1}\|}{\|g_{k+M+1}\|\|\hat{g}_{k,M+1}\|}
\le \frac{ c\|x_k-x^*\| ^2 }{\Delta\Lambda_1 C(\bar{\Delta})}
\le  \frac{ c \|g_k\| \|x_k-x^*\|   }{\Delta\Lambda_1^2 C(\bar{\Delta})}
\\
& \le  \frac{ c \varkappa \Lambda_2 \Delta \|x_k-x^*\|   }{\Delta\Lambda_1^2 C(\bar{\Delta})}
=  \frac{ c \varkappa^2 }{\Lambda_1 C(\bar{\Delta})} \|x_k-x^*\|.
\end{align*}
This together with \eqref{stepsizeinequality} shows that \eqref{nona44} holds.

Case III: $ \alpha_{k+M+1}^{BB} > \alpha_{k+M+1}^{stab} $ and
$ \hat{\alpha}_{k, M+1}^{BB}  \le \hat{\alpha}_{k, M+1}^{stab} $. \\
If $\alpha_{k+M+1}^{stab} \ge \hat{\alpha}_{k, M+1}^{BB}$, then by \eqref{BBstepbound}, we have
$$
|\alpha_{k+M+1}-\hat{\alpha}_{k,M+1}|
=   \alpha_{k+M+1}^{stab} - \hat{\alpha}_{k, M+1}^{BB}
\le  \alpha_{k+M+1}^{BB} - \hat{\alpha}_{k, M+1}^{BB} \le c\|x_{k}-x^*\|.
$$
Suppose now that $\alpha_{k+M+1}^{stab} < \hat{\alpha}_{k, M+1}^{BB}$. Then we get
$$
|\alpha_{k+M+1}-\hat{\alpha}_{k,M+1}|
=  \hat{\alpha}_{k, M+1}^{BB} - \alpha_{k+M+1}^{stab}
\le  \hat{\alpha}_{k, M+1}^{stab} - \alpha_{k+M+1}^{stab}.
$$
To use the same reasoning as in Case II, we need to have lower bounds for $\|g_{k+M+1}\|$ and
$\|\hat{g}_{k,M+1}\|$. To this end,
applying \eqref{alpha_large_k} and \eqref{hat_alpha_large_k}, we obtain
\begin{equation}\label{nona42}
  \|g_{k+M+1}\| = \frac{ \Delta }{ \alpha_{k+M+1}^{stab} }
   > \frac{ \Delta }{\alpha_{k+M+1}^{BB} }
   \ge \Delta\Lambda_1.
\end{equation}
Furthermore, \eqref{g_x}, \eqref{L} and \eqref{nona42} yield
$$
\|\hat{g}_{k,M+1}\| \ge \|g_{k+M+1}\| - \|g_{k+M+1} - \hat{g}_{k,M+1} \|
\ge \Delta C(\bar{\Delta}).
$$
This lower bound is positive whenever $\bar{\Delta} < 2\Lambda_1 / (\varkappa^4  L)$.
The two lower bounds allows us to conclude, by analogy with Case II, that \eqref{nona44} holds.

Case IV: $ \alpha_{k+M+1}^{BB} > \alpha_{k+M+1}^{stab} $ and
$ \hat{\alpha}_{k, M+1}^{BB}  > \hat{\alpha}_{k, M+1}^{stab} $.\\
It follows from \eqref{g_x}, \eqref{nona11}, \eqref{nona41} and \eqref{nona42} that
\begin{align*}
|\alpha_{k+M+1}-\hat{\alpha}_{k,M+1}|
&\le \Delta \frac{\|\hat{g}_{k,M+1}-g_{k+M+1}\|}{\|g_{k+M+1}\|\|\hat{g}_{k,M+1}\|}
\le \Delta \frac{ c\|x_k-x^*\| ^2 }{\Delta^2 \Lambda_1^2}
\le \frac{ c \|g_k\| \|x_k-x^*\|   }{\Delta \Lambda_1^3}
\\
&\le  \frac{ c \varkappa \Lambda_2 \Delta \|x_k-x^*\|   }{\Delta \Lambda_1^3}
\le \frac{ c \varkappa \Lambda_2  }{ \Lambda_1^3} \|x_k-x^*\| .
\end{align*}

Collecting the results in the considered four cases, one can see that \eqref{nona44} is satisfied for any
$$\Delta<\min\left\lbrace \frac{1}{2 \gamma \varkappa^3}, \frac{2\Lambda_1 }{\varkappa^4  L}\right\rbrace .$$
This completes the induction and finally proves that inequalities
\eqref{nona10}-\eqref{nona13}
hold for all $j\in[0,\, m]$.
\hfill $\Box$ \end{proof}

Next we will establish the local convergence property of BBstab for nonquadratic functions.

\begin{theorem}\label{th:local}
There exists positive $\bar{\Delta}$ such that, for any positive $\Delta \le \bar{\Delta}$ and any starting points $x_0,~x_1\in \Omega_{1,2,3'}$, the sequence $\{x_k\}$ converges to $x^*$ with {\rm R}-linear rate.
\end{theorem}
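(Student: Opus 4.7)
The plan is to compare each BBstab iterate $x_{k+j}$ with the iterate $\hat{x}_{k,j}$ generated by the same algorithm applied to the Taylor-quadratic model $\hat{f}$ of $f$ at $x^*$, so that the R-linear contraction available in the quadratic case (Theorem~\ref{th:conv_quad}, transferred via Lemma~\ref{lem:linear_R_rate}) passes to $\{g_k\}$ up to the higher-order drift $O(\|x_k-x^*\|^2)$ quantified by Lemma~\ref{lem:relation}. I would work entirely with $\|g_k\|$ and recover the $x$-convergence at the end through the two-sided bound \eqref{g_x}.

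First, I would restrict $\bar{\Delta}$. Three conditions will be imposed: (a) $\varkappa^2\bar{\Delta}\le\rho$, which together with \eqref{g_x} yields $\Omega_{1,2,3'}\subseteq B_{\rho}(x^*)$; (b) the upper bound on $\Delta$ required by Lemma~\ref{lem:relation}, so that $\gamma$ and its perturbation estimate are available; (c) a final smallness condition specified below. Under (a) and the hypothesis $x_0,x_1\in\Omega_{1,2,3'}$, Lemma~\ref{lem:finite_step} keeps $x_k\in\Omega_{1,2,3'}\subseteq B_\rho(x^*)$ for every $k$, with $\|x_k-x^*\|\le\varkappa^2\Delta$. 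Let $\bar{j}$ denote the integer provided by Lemma~\ref{lem:linear_R_rate}.

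For each $k\ge 1$ let $m_k\in[1,\bar{j}]$ be the smallest index with $\|\hat{g}_{k,m_k}\|\le\tfrac12\|\hat{g}_{k,0}\|$; existence is guaranteed by Lemma~\ref{lem:linear_R_rate}. By minimality of $m_k$, condition \eqref{nona9} holds for $j\in[0,m_k-1]$, so Lemma~\ref{lem:relation} delivers $\|x_{k+m_k}-\hat{x}_{k,m_k}\|\le\gamma\|x_k-x^*\|^2$. Using $\hat{g}_{k,j}=H(\hat{x}_{k,j}-x^*)$, the triangle inequality, \eqref{L}, and $\|H\|\le\Lambda_2$, one readily obtains $\|g_{k+m_k}-\hat{g}_{k,m_k}\|\le c\|x_k-x^*\|^2$ and $\|\hat{g}_{k,0}-g_k\|\le\tfrac{L}{2}\|x_k-x^*\|^2$. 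Combining these with $\|\hat{g}_{k,m_k}\|\le\tfrac12\|\hat{g}_{k,0}\|$ produces
$$
\|g_{k+m_k}\|\le\tfrac12\|g_k\|+c'\|x_k-x^*\|^2\le\Bigl(\tfrac12+\tfrac{c'\varkappa^2\bar{\Delta}}{\Lambda_1}\Bigr)\|g_k\|,
$$
where the last step uses $\|x_k-x^*\|^2\le(\varkappa^2\bar{\Delta}/\Lambda_1)\|g_k\|$. Imposing condition (c), namely $c'\varkappa^2\bar{\Delta}/\Lambda_1\le\tfrac14$, then produces the uniform block contraction $\|g_{k+m_k}\|\le\tfrac34\|g_k\|$.

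Finally, defining $k_1=1$ and $k_{i+1}=k_i+m_{k_i}$, induction gives $\|g_{k_i}\|\le(3/4)^{i-1}\|g_1\|$, while Lemma~\ref{lem:g_k} applied along iterates in $\Omega_{1,2,3'}$ bounds $\|g_{k_i+j}\|\le\varkappa^{\bar{j}}\|g_{k_i}\|$ for $j\in[0,\bar{j}]$. Since $m_{k_i}\le\bar{j}$, reparameterizing in the absolute index yields $\|g_k\|\le C\tau^k\|g_1\|$ with $\tau=(3/4)^{1/\bar{j}}\in(0,1)$, and \eqref{g_x} then converts this into the R-linear decay \eqref{R-linear}. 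I expect the main obstacle to be keeping the constants in Lemma~\ref{lem:relation} and in the Lipschitz estimates uniform across all $k$ and all $\Delta\in(0,\bar{\Delta}]$ so that the smallness condition (c) can be met by a single $\bar{\Delta}$ depending only on $\Lambda_1,\Lambda_2,L,\rho$; the rest is bookkeeping.
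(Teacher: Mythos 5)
Your argument is correct and is essentially the proof the paper has in mind: the paper itself omits the details, deferring to Theorem~2.3 of Dai et al.\ (the cyclic BB paper), and your block-contraction scheme --- compare a block of at most $\bar{j}$ BBstab steps with the quadratic-model iterates via Lemma~\ref{lem:relation}, absorb the $O(\|x_k-x^*\|^2)$ drift by shrinking $\bar{\Delta}$ so the halving of $\|\hat{g}_{k,\cdot}\|$ from Lemma~\ref{lem:linear_R_rate} survives as a $\tfrac34$-contraction of $\|g_k\|$, then chain blocks and control intermediate indices with Lemma~\ref{lem:g_k} --- is exactly that standard argument. The uniformity of the constants you worry about at the end is already built into the statement of Lemma~\ref{lem:relation} ($\gamma$ and $c$ are independent of $\Delta\in(0,\bar{\Delta}]$ and of $x_k\in\Omega_{1,2,3'}$), so no gap remains.
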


Lemma~\ref{lem:relation} allows us to skip the proof of this theorem because the reasoning is similar to the proof of Theorem~2.3 in \cite{dai2006cyclic}.

We complete the analysis by presenting the following global convergence result.

\begin{theorem}\label{th:global}
There exists positive $\bar{\Delta}$ such that, for any positive $\Delta \le \bar{\Delta}$ and any starting points $x_0,~x_1\in R^n$, the sequence $\{x_k\}$ converges to $x^*$ with {\rm R}-linear rate.
\end{theorem}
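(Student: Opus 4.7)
The plan is to reduce the global statement to the local one (Theorem~\ref{th:local}) by using Lemma~\ref{lem:finite_step} to insert a finite ``burn-in'' phase, and then to absorb this phase into the constant of the R-linear estimate.

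First I would choose $\bar{\Delta}$. Let $\bar{\Delta}_{\mathrm{loc}}$ be the threshold provided by Theorem~\ref{th:local}, and set
$$
\bar{\Delta} = \min\left\{ \bar{\Delta}_{\mathrm{loc}}, \ \frac{\rho}{\varkappa^{2}} \right\},
$$
so that for every $\Delta\in(0,\bar{\Delta}]$ the set $\Omega_{1,2,3'}$ is contained in $B_{\rho}(x^{*})$ (using $\|x-x^{*}\|\le \|g(x)\|/\Lambda_{1}\le \varkappa^{2}\Delta$ from \eqref{g_x} and the definition of $\Omega_{1,2,3'}$); this is exactly the configuration in which Assumption~A2 and the analysis of Lemmas~\ref{lem:linear_R_rate}--\ref{lem:relation} are applicable.

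Next, fix arbitrary starting points $x_{0},x_{1}\in R^{n}$. By Lemma~\ref{lem:finite_step}, there is an integer $K\ge 1$ such that $x_{k}\in \Omega_{1,2,3'}\subseteq B_{\rho}(x^{*})$ for all $k\ge K$. In particular, $x_{K},x_{K+1}\in \Omega_{1,2,3'}$. Regarding the tail of the sequence as a new run of BBstab with initial pair $(x_{K},x_{K+1})$, Theorem~\ref{th:local} applies (because $\Delta\le \bar{\Delta}_{\mathrm{loc}}$), and yields constants $\gamma'>0$ and $c\in(0,1)$ with
$$
\|x_{K+j}-x^{*}\|\le \gamma'\, c^{\,j-1}\,\|x_{K+1}-x^{*}\|,\qquad \forall j\ge 1.
$$

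Finally, I would fold the initial $K$ iterates into the constant. If $x_{1}=x^{*}$ there is nothing to prove, so assume $\|x_{1}-x^{*}\|>0$. Lemma~\ref{lem:finite_step} (together with \eqref{g_x}) bounds $\|x_{k}-x^{*}\|$ uniformly by $\varkappa^{2}\Delta$ for all $k\ge K$, and the finitely many iterates $x_{2},\dots,x_{K+1}$ are trivially bounded. Hence we can choose a single constant
$$
\gamma = \max\left\{\, \max_{1\le k\le K}\frac{\|x_{k+1}-x^{*}\|}{c^{k}\,\|x_{1}-x^{*}\|}\,, \ \frac{\gamma'\,\|x_{K+1}-x^{*}\|}{c^{K}\,\|x_{1}-x^{*}\|}\,\right\},
$$
and verify directly that $\|x_{k+1}-x^{*}\|\le \gamma\, c^{k}\,\|x_{1}-x^{*}\|$ for every $k\ge 1$, which is precisely \eqref{R-linear}.

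The main obstacle I expect is the bookkeeping in the last step, namely checking that the local rate obtained from the shifted initial pair $(x_{K},x_{K+1})$ really does produce an R-linear estimate of the form \eqref{R-linear} with respect to the original $x_{1}$, rather than only with respect to the post burn-in iterate $x_{K+1}$. The argument is routine once one observes that $K$ and the norms $\|x_{K+1}-x^{*}\|$ are finite quantities (so they can be absorbed into $\gamma$), but some care is needed because $K$ depends on the starting pair; fortunately, the definition of R-linear convergence allows $\gamma$ to depend on $x_{0},x_{1}$, so no uniformity in the starting points is required.
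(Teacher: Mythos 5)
Your proposal is correct and follows essentially the same route as the paper: the published proof simply takes $\bar{\Delta}$ from Theorem~\ref{th:local}, invokes Lemma~\ref{lem:finite_step} to place all iterates in $\Omega_{1,2,3'}$ after finitely many steps, and applies the local theorem to the tail. You have merely spelled out the bookkeeping (restarting from the pair $(x_K,x_{K+1})$ and absorbing the finite burn-in into the constant $\gamma$) that the paper leaves implicit.
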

\begin{proof}
Let $\bar{\Delta} > 0$ be given by Theorem~\ref{th:local}, which ensures local convergence to $x^*$.
According to Lemma~\ref{lem:finite_step}, after a finite number of BBstab iterations, all iterates will belong to $\Omega_{1,2,3'}$. This finally proves global convergence with R-linear rate.
\hfill $\Box$ \end{proof}

\section{Numerical Results}
\label{sec:numres}
Our algorithms were implemented in MATLAB.
The algorithms are terminated when either the number of iterations exceeds $10^5$, or
$$
\|g_k\| \le 10^{-6}\cdot \|g_0\|.
$$
In the next two subsections, results of numerical experiments are presented separately for quadratic and nonquadratic test functions.

A successful value of $\Delta$ is obviously problem dependent. In our implementation, we try to estimate its order of magnitude by setting $\Delta = +\infty$ for the first few iterations and making use of $\|s_k\|$ produced at these iterations by the standard BB algorithm. At the subsequent iterations, the constant value
\begin{equation}\label{delta}
\Delta = c\cdot \min \{\|s_1\|, \|s_2\|, \|s_3\| \},
\end{equation}
is applied, where $c>0$ is a parameter. It turns out that this adaptive choice of $\Delta$ is less problem dependent.

It is necessary to emphasize that the stabilization was designed not to speed-up the BB method when it safely converges. In such cases, it may increase the number of iterations, which is a negative outcome. The main purpose of the stabilization is to prevent the BB method from making too long steps. This serves for decreasing the number of BB iterations in case of its poor convergence or even making the method convergent when it fails, which is a positive outcome. Outcomes of all these aforementioned types were observed in our numerical experiments with stabilizing the BB method. One can easily recognize them in the tables presented below.

We focus here on demonstrating the potentials of improving convergence for the BB method.
Therefore, our stabilized version is not checked here against another optimization algorithms.
Since the computational cost of one iteration for the BB algorithms are practically the same as for their stabilized versions, only the number of iterations are compared. Notice that the number of iterations is the same as the number of gradient evaluations.

In our numerical experiments,
%%the BB1 algorithm was generating too long steps more frequently than its counterpart.
the BB1 algorithm was generating too long steps more frequently than the BB2 algorithm. This is often caused by relatively too small values of the scalar product $s_{k-1}^Ty_{k-1}$ in the denominator of $\alpha_k^{BB1}$. This explains why the stabilization is, in general,
%%more important for the BB1 stepsize choice than for its counterpart.
more important for the BB1 stepsize choice than for the BB2.
Therefore, the numerical results presented here refer mainly to the BB1.

\begin{table}[thb]
	\begin{center}
		\caption{Numerical results for linear systems
			from the SuiteSparse Matrix Collection, Part I.
		}
		\label{tab:linear1}
		\begin{scriptsize}
			\begin{tabular}{|l r r r c|l r r r c|}
				\hline
				\multicolumn{2}{|c}{PROBLEM} & \multicolumn{1}{c}{BB1} & \multicolumn{2}{c|}{BB1stab} & \multicolumn{2}{|c}{PROBLEM} & \multicolumn{1}{c}{BB1} & \multicolumn{2}{c|}{BB1stab}
				\\
				\multicolumn{1}{|c}{name} & \multicolumn{1}{c|}{$n$} & \multicolumn{1}{|c|}{it} & \multicolumn{1}{|c}{it} & \multicolumn{1}{c|}{$c$} & \multicolumn{1}{|c}{name} & \multicolumn{1}{c|}{$n$} & \multicolumn{1}{|c|}{it} & \multicolumn{1}{|c}{it} & \multicolumn{1}{c|}{$c$}
				\\
				\hline
				1138\_bus & 1\,138 & 35\,202 &\bf 21\,384 & 0.3 & ex33 & 1\,733 & 1\,303 & \bf 958 & 0.2
				\\
				2cubes\_sphere & 101\,492 & 5\,576 & \bf 4\,662 & 0.3 & Flan\_1565 & 1\,564\,794 & 13\,781 & 16\,537 & 0.25
				\\
				af\_0\_k101  &  503\,625  &  4\,433  &  \bf 2\,634   &  0.2  &  fv3  &  9\,801  &  449  &  \bf 449   &  0.2
				\\
				af\_1\_k101  &  503\,625  &  2\,473  &  2\,766   &  0.25  &  G2\_circuit  &  150\,102  &  1\,139  & \bf  1\,139   &  0.25
				\\
				af\_2\_k101  &  503\,625  &  4\,034  & \bf 2\,499   &  0.25  &  G3\_circuit  &  1\,585\,478  &  2\,177  &  \bf 2\,177   &  0.2
				\\
				af\_3\_k101  &  503\,625  &  3\,627  & \bf 2\,378   &  0.2  &  Geo\_1438  &  1\,437\,960  &  32\,134  & \bf 29\,095   &  0.3
				\\
				af\_4\_k101  &  503\,625  &  3\,047  &  5\,368   &  0.3  &  gyro  &  17\,361  &  10\,611  &  11\,925   &  0.3
				\\
				af\_5\_k101  &  503\,625  &  2\,397  &  2\,753   &  0.2  &  gyro\_m  &  17\,361  &  3\,325  & \bf 2\,225     &0.25
				\\
				af\_shell3  &  504\,855  &  1\,956  &  4\,565   &  0.3  &  hood  &  220\,542  &  4\,073  &  4\,308   &  0.25
				\\
				af\_shell7  &  504\,855  &  2\,495  &  5\,515   &  0.3  &  Hook\_1498  &  1\,498\,023  &  7\,839  & \bf 7\,358   &  0.25
				\\
				apache1  &  80\,800  &  18\,017  & \bf 9\,143   &  0.2  &  inline\_1  &  503\,712  &  20\,490  & \bf 16\,833   &  0.3
				\\
				apache2  &  715\,176  &  17\,807  & \bf 17\,807   &  0.2  &  jnlbrng1  &  40\,000  &  124  & \bf 108     &0.2
				\\
				audikw\_1  &  943\,695  &  92\,730  & \bf 65\,818   &  0.2  &  Kuu  &  7\,102  &  1\,733  &  \bf 949     &0.3
				\\
				bcsstk08  &  1\,074  &  4\,627  &  5\,113   &  0.3  &  ldoor  &  952\,203  &  9\,133  &  9\,281     &0.3
				\\
				bcsstk09  &  1\,083  &  747  & \bf 713   &  0.3  &  LF10000  &  19\,998  &  48\,867  &  \bf 38\,250     &0.2
				\\
				bcsstk10  &  1\,086  &  3\,416  & \bf 2\,383   &   0.25  &  LFAT5000  &  19\,994  &  22\,358  & \bf 22\,358     &0.25
				\\
				bcsstk11  &  1\,473  &  2\,204  & \bf 1\,699   &  0.2  &  m\_t1  &  97\,578  &  1\,826  &  \bf 1\,826     &0.2
				\\
				bcsstk13  &  2\,003  &  6\,848  &  8\,171   &  0.3  &  mhd3200b  &  3\,200  &  2\,065  &  \bf 2\,065   &  0.2
				\\
				bcsstk14  &  1\,806  &  3\,577  & \bf 2\,682   &  0.25  &  mhd4800b  &  4\,800  &  2\,466  & \bf 2\,466   &  0.2
				\\
				bcsstk15  &  3\,948  &  7\,006  & \bf 4\,872   &  0.25  &  msc01050  &  1\,050  &  15\,187  & \bf 11\,529   &  0.25
				\\
				bcsstk16  &  4\,884  &  401  & \bf 401   &  0.25  &  msc01440  &  1\,440  &  807  & \bf 807     &0.2
				\\
				bcsstk17  &  10\,974  &  27\,014  & \bf 14\,841   &  0.25  &  msc04515  &  4\,515  &  8\,066  & \bf 6\,889   &  0.2
				\\
				bcsstk18  &  11\,948  &  5\,895  & \bf 4\,332   &  0.3  &  msc10848  &  10\,848  &  3\,356  & \bf 3\,356   &  0.2
				\\
				bcsstk21  &  3\,600  &  1\,455  &  1\,594   &  0.25  &  msc23052  &  23\,052  &  19\,088  &  \bf 7\,340   &  0.2
				\\
				bcsstk23  &  3\,134  &  8\,182  & \bf 5\,619   &  0.2  &  msdoor  &  415\,863  &  8\,113  &  \bf 6\,655     &0.25
				\\
				bcsstk24  &  3\,562  &  2\,383  & \bf 1\,537   &  0.3  &  nasa1824  &  1\,824  &  9\,520  &  \bf 6\,515   &  0.3
				\\
				bcsstk25  &  15\,439  &  8\,369  &  8\,971   &  0.25  &  nasa2146  &  2\,146  &  355  &  \bf 355   &  0.2
				\\
				bcsstk26  &  1\,922  &  12\,624  & \bf 8\,761   &  0.2  &  nasa2910  &  2\,910  &  19\,574  & \bf 13\,683   &  0.3
				\\
				bcsstk27  &  1\,224  &  863  &  887   &  0.3  &  nasa4704  &  4\,704  &  43\,448  &  \bf 32\,961   &  0.2
				\\
				bcsstk36  &  23\,052  &  15\,466  & \bf 12\,001   &  0.25  &  nasasrb  &  54\,870  &  10\,302  & \bf 10\,223   &  0.3
				\\
				bcsstk38  &  8\,032  &  1\,584  & \bf 1\,584   &  0.25  &  nd3k  &  9\,000  &  67\,509  &  86\,986 &    0.25
				\\
				bcsstm08  &  1\,074  &  4\,183  & \bf 4\,183   &  0.2  &  nd6k  &  18\,000  &  92\,468  &  \bf 41\,133   &  0.2
				\\
				bcsstm11  &  1\,473  &  623  &  \bf 287   & 0.3  &  nd24k  &  72000  &  84\,165  & \bf 73\,216     &0.3
				\\
				bcsstm12  &  1\,473  &  2\,838  & \bf 2\,375   &  0.3  &  offshore  &  259\,789  &  3\,826  &  3\,949 &    0.3
				\\
				bcsstm23  &  3\,134  &  2\,143  & \bf 1\,857   &  0.25  &  oilpan  &  73\,752  &  4\,647  &  \bf 3\,899   &  0.3
				\\
				bcsstm24  &  3\,562  &  2\,102  & \bf 1\,611   &  0.25  &  olafu  &  16\,146  &  69\,575  &  80\,804     &0.3
				\\
				\hline
			\end{tabular}
		\end{scriptsize}
	\end{center}
\end{table}

\begin{table}[thb]
	\begin{center}
		\caption{Numerical results for linear systems
			from the SuiteSparse Matrix Collection, Part II.
		}
		\label{tab:linear2}
		\begin{scriptsize}
			\begin{tabular}{|l r r r c|l r r r c|}
				\hline
				\multicolumn{2}{|c}{PROBLEM} & \multicolumn{1}{c}{BB1} & \multicolumn{2}{c|}{BB1stab} & \multicolumn{2}{|c}{PROBLEM} & \multicolumn{1}{c}{BB1} & \multicolumn{2}{c|}{BB1stab}
				\\
				\multicolumn{1}{|c}{name} & \multicolumn{1}{c|}{$n$} & \multicolumn{1}{|c|}{it} & \multicolumn{1}{|c}{it} & \multicolumn{1}{c|}{$c$} & \multicolumn{1}{|c}{name} & \multicolumn{1}{c|}{$n$} & \multicolumn{1}{|c|}{it} & \multicolumn{1}{|c}{it} & \multicolumn{1}{c|}{$c$}
				\\
				\hline
				bcsstm25  &  15\,439  &  2\,266  & \bf 2\,119   &  0.2  &  parabolic\_fem  &  525\,825  &  5\,451  & \bf 2\,989 &    0.2
				\\
				bcsstm26  &  1\,922  &  1\,614  &  \bf 1\,239   &  0.2  &  plat1919  &  1\,919  &  3\,297  &  \bf 2\,804   &  0.2
				\\
				bcsstm39  & 46\,772  &  575  &    \bf 575   & 0.2  &  plbuckle  &  1\,282  &  5\,601  &  \bf 3\,726   &  0.3
				\\
				BenElechi1  & 245\,874  &  3\,137  &  \bf 3121  &  0.3  &  Pres\_Poisson  &  14\,822  &  17\,291  & \bf  13\,461   &  0.25
				\\
				bloweybq  & 10\,001  &  107  & \bf  107   &  0.2  &  pwtk  &  21\,7918  &  26\,060  & \bf 21\,798   &  0.25
				\\
				bmw7st\_1  &  141\,347  &  2\,463  &  \bf 2\,463   &  0.2 &  s1rmq4m1  &  5\,489  &  9\,043  & \bf 6\,890 &    0.2
				\\
				bmwcra\_1  &  148\,770  &  86\,966  &  123\,528   &  0.25  &  s1rmt3m1  &  5\,489  &  10\,092  &  11\,576   &  0.25
				\\
				bodyy4  &  17\,546  &  154  &  \bf 154   &  0.25  &  s2rmq4m1  &  5\,489  &  5\,371  &  8\,958   &  0.2
				\\
				bodyy5  &  18\,589  &  405  &  \bf 405   &  0.3  &  s2rmt3m1  &  5\,489  &  7\,850  &  \bf 6\,039   &  0.25
				\\
				bodyy6  &  19\,366  &  809  &  853     &0.3  &  s3dkq4m2  &  90\,449  &  16\,169  &  \bf 16\,169   &  0.2
				\\
				bone010  &  986\,703  &  55\,659  &  \bf 55\,659   &  0.25  &  s3dkt3m2  &  90\,449  &  18\,654  &  \bf 10\,739   &  0.2
				\\
				boneS01  &  127\,224  &  7\,688  &  \bf 5\,669   &  0.2  &  s3rmq4m1  &  5\,489  &  8\,413  &  \bf 7\,848   &  0.25
				\\
				boneS10  &  914\,898  &  28\,584  &  \bf 24\,899   &  0.2  &  s3rmt3m1  &  5\,489  &  16\,901  &  19\,625 &    0.3
				\\
				bundle1  &  10\,581  &  244  &  \bf 244   &  0.2  &  s3rmt3m3  &  5\,357  &  15\,586  &  \bf 6\,737     &0.25
				\\
				cant  &  62\,451  &  19\,609  &  22\,895   &  0.2  &  Serena  &  1\,391\,349  &  47\,765  &  \bf 23\,155   &  0.25
				\\
				cbuckle  &  13\,681  &  6\,963  &  10\,770   &  0.25  &  ship\_001  &  34\,920  &  17\,575  &  \bf 17\,499   &  0.2
				\\
				cfd1  &  70\,656  &  4\,475  &  \bf 3\,555   &  0.2  &  ship\_003  &  121\,728  &  64\,349  &  69\,948   &  0.3
				\\
				cfd2  &  123\,440  &  5\,515  &  8\,145 &    0.25  &  shipsec1  &  140\,874  &  8\,730  &  \bf 6\,681   &  0.2
				\\
				Chem97ZtZ  &  2\,541  &  125  &  \bf 114   &  0.25  &  shipsec5  &  179\,860  &  2\,565  &  3\,113   &  0.3
				\\
				consph  &  83\,334  &  15\,034  &  \bf 11\,232   &  0.25  &  shipsec8  &  114\,919  &  3\,900  &  5\,827   &  0.3
				\\
				crankseg\_1  &  52\,804  &  4\,012  &  \bf 4\,012   &  0.2  &  smt  &  25\,710  &  38\,442  &  \bf 24\,695     &0.25
				\\
				crankseg\_2  &  63\,838  &  4\,914  &  \bf 3\,614   &  0.3  &  sts4098  &  4\,098  &  8\,262  &  12\,042   &  0.2
				\\
				crystm01  &  4\,875  &  100  &  \bf 100   &  0.2  &  t2dah\_e  &  11\,445  &  2\,557  &  \bf 1\,612   &  0.3
				\\
				crystm02  &  13\,965  &  114  &  \bf 114     &0.2  &  t2dal\_e  &  4\,257  &  1\,585  & \bf 1\,171   &  0.25
				\\
				ct20stif  &  52\,329  &  6\,482  &  \bf 6\,482   &  0.25  &  t3dl\_e  &  20\,360  &  503  &  \bf 361   &  0.2
				\\
				cvxbqp1  &  50\,000  &  383  &  \bf 383     &0.2  &  thermal1  &  82\,654  &  5\,812  &  \bf 5\,812   &  0.2
				\\
				Dubcova1  &  16\,129  &  181  &  \bf 181   &  0.2  &  thermal2  &  1\,228\,045  &  22\,201  &  \bf 7\,170   &  0.25
				\\
				Dubcova2  &  65\,025  &  372  &  \bf 348   &  0.3  &  tmt\_sym  &  726\,713  &  40\,335  &  \bf 40\,335   &  0.25
				\\
				Dubcova3  &  146\,689  &  520  &  \bf 429   &  0.2  &  Trefethen\_2000  &  2\,000  &  258  &  \bf 258   &  0.2
				\\
				ex3  &  1\,821  &  508  &  \bf 387   &  0.2  &  Trefethen\_20000  &  20\,000  &  358  &  \bf 358     &0.2
				\\
				ex9  &  3\,363  &  1\,202  &  \bf 1\,202   &  0.3  &  Trefethen\_20000b  &  19\,999  &  404  &  \bf 404     &0.2
				\\
				ex10  &  2\,410  &  3\,038  &  \bf 2\,023   &  0.25  &  vanbody  &  47\,072  &  19\,354  &  \bf 19\,133   &  0.2
				\\
				ex10hs  &  2\,548  &  2\,412  &  \bf 1\,628   &  0.2  &  wathen100  &  30\,401  &  238  &  \bf 238   &  0.25
				\\
				ex13  &  2\,568  &  2\,972  &  \bf 2\,972   &  0.2  &  wathen120  &  36\,441  &  308  &  \bf 308   &  0.2
				\\
				ex15  &  6\,867  &  3\,022  &    3\,298   &  0.3  &  --- & --- & --- & --- & ---\\				
				\hline
			\end{tabular}
		\end{scriptsize}
	\end{center}
\end{table}

\subsection{Quadratic test functions}
A part of the numerical experiments was related to minimizing convex quadratic functions \eqref{quadratic}. This problem is equivalent to solving the system of linear equations
$$
Ax = b.
$$
The matrices in our set of test problems come from the SuiteSparse Matrix Collection \cite{DavisHu-11,sparse}. For generating the vector $b$, we assumed that the solution $x^* = e$, i.e., $b = Ae$, where $e = (1, 1, \ldots , 1)^T$. The total number of problems in our test set is $141$, where the problem size $n$ varies from thousands to millions.

For the adaptive selection of $\Delta$ by formula \eqref{delta}, we tried just a few values of the parameter $c$, namely, $0.2$, $0.25$ and $0.3$. In Tables \ref{tab:linear1} and \ref{tab:linear2}, the number of iterations are reported for algorithms BB1 and BB1stab. For the latter, the best of the three results is presented along with the corresponding value of $c$. If the reported result is the same as for the BB1 algorithm, then it is obvious that the number of iterations remains the same for all values of c larger than the indicated one. The number of iterations, which is not worse than for the BB1 algorithm, are highlighted in this and other tables in this paper. One can see that, comparing with the BB1, its stabilized version is faster in solving $74$ problems, while it is slower in $30$ problems. Furthermore, the reduction in the number of iterations obtained by virtue of the stabilization was often substantial.
We also tested the BB2 and BB2stab algorithms for these same 141 problems. We tried $c\,=\,0.1$, $0.2$, $0.25$ and $0.3$ in the adaptive selection of $\Delta$ by formula \eqref{delta}. Comparing with the BB2, BB2stab is faster in solving $58$ problems, while for the given values of $c$, the stabilization is unable to decrease the number of BB2 iterations in $58$ problems.

\subsection{Nonquadratic test functions}
For general functions,
it is more difficult than for quadratic ones to avoid the cases, when $x_1$ is chosen too close to $x_0$ or too far away of it. In order to avoid such poor choices of these two points, our BBstab algorithms are initialized with only one point, namely, $x_0$. The point $x_1$ is produced in the algorithms by checking if the inequality
$f(x_0 + s_0) < f(x_0)$ is satisfied for $s_0 = - \alpha_0 g_0$, where $\alpha_0 = 1/\|g_0\|_{\infty}$.
Otherwise, a number, typically few, of backtracking steps are performed by dividing the current vector $s_0$ by $4$, while the required inequality is violated.

We begin here by comparing the performance of the BB algorithms and their stabilized versions on the strongly convex Raydan function \eqref{raydan} for $n = 1000$. The point $x_0=-10\cdot e$ was used for starting the algorithms.
The standard BB1 algorithm failed to
solve the problem. After two iterations, an overflow in computing $s_k^Ty_k$
was reported.
If to introduce the bounds $[10^{-30},10^{30}]$ for $\alpha^{BB1}$, like it is often done in practice,
then it also fails, although after a larger number of iterations. Namely, at iteration 123 and all subsequent iterations, an underflow was observed in calculating
$x_{k+1}$ for $\|s_k\| <10^{-26}$.
In these two cases, the standard BB2 also failed.
However, the same test problem for the same $x_0$ was successfully solved by BB1stab and BB2stab with $\Delta = 2$  in $418$ and $416$ iterations, respectively. No bounds, like $[10^{-30},10^{30}]$, are used in our implementation of the BB algorithms and their stabilized versions.
\begin{figure}[t!]
	\centering
	\vspace{-6.7cm}
	\includegraphics[width=0.99\textwidth]{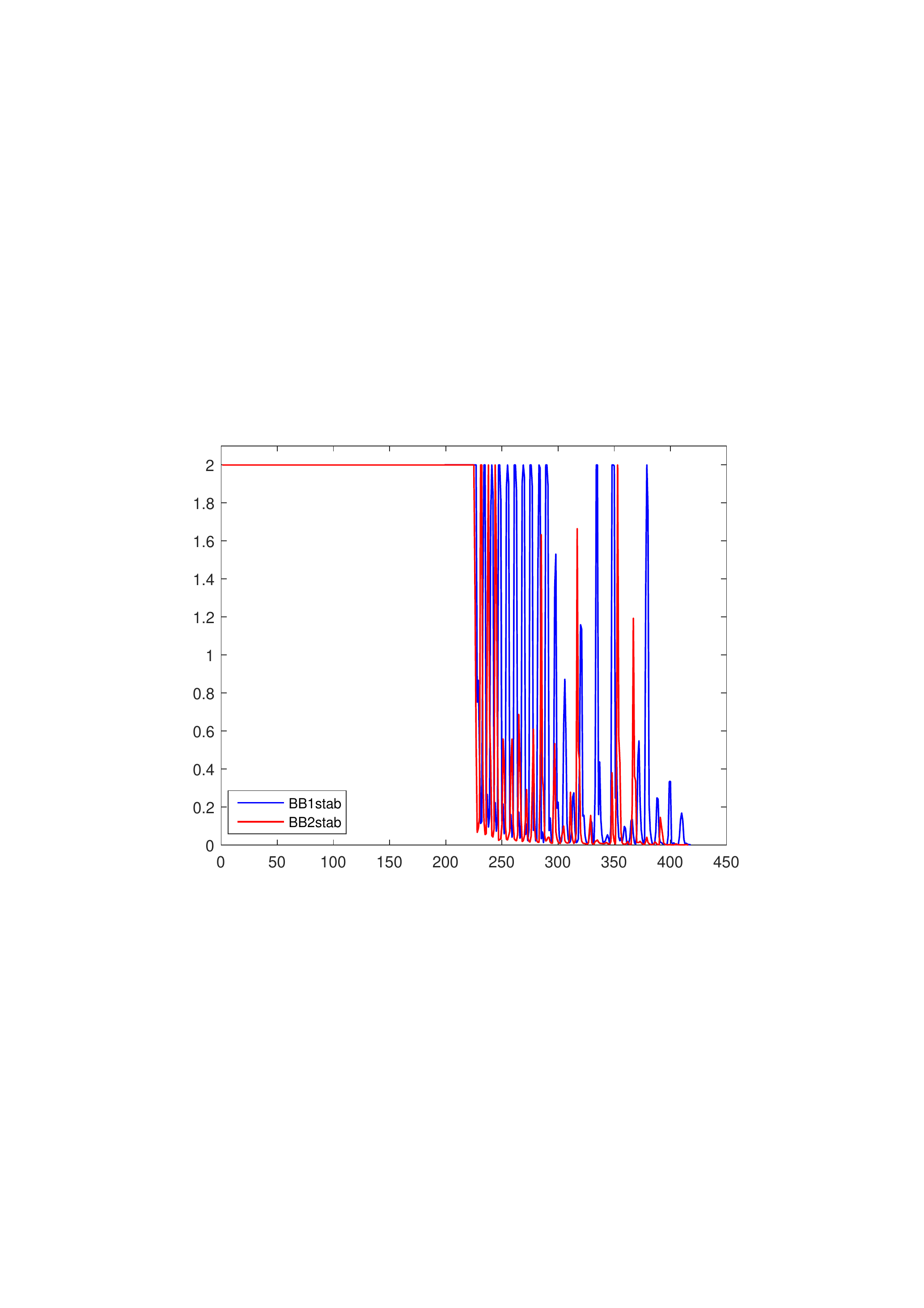}
	\vspace{-6.3cm}
	\caption{Graphs of $\|s_k\|$ for BB1stab and BB2stab with $\Delta = 2$ for Raydan function \eqref{raydan}.}\label{fig:Raydan_stab}
\end{figure}
Figure~\ref{fig:Raydan_stab} illustrates the stabilization effect. One can see that the BB1 was generating too long steps more frequently than the BB2. This observation is in general agreement with the other numerical experiments that we performed and also with the theory, which says that
$\alpha_k^{BB1} \ge \alpha_k^{BB2}$.
\begin{figure}[b!]
	\centering
	\vspace{-6.7cm}
	\includegraphics[width=0.99\textwidth]{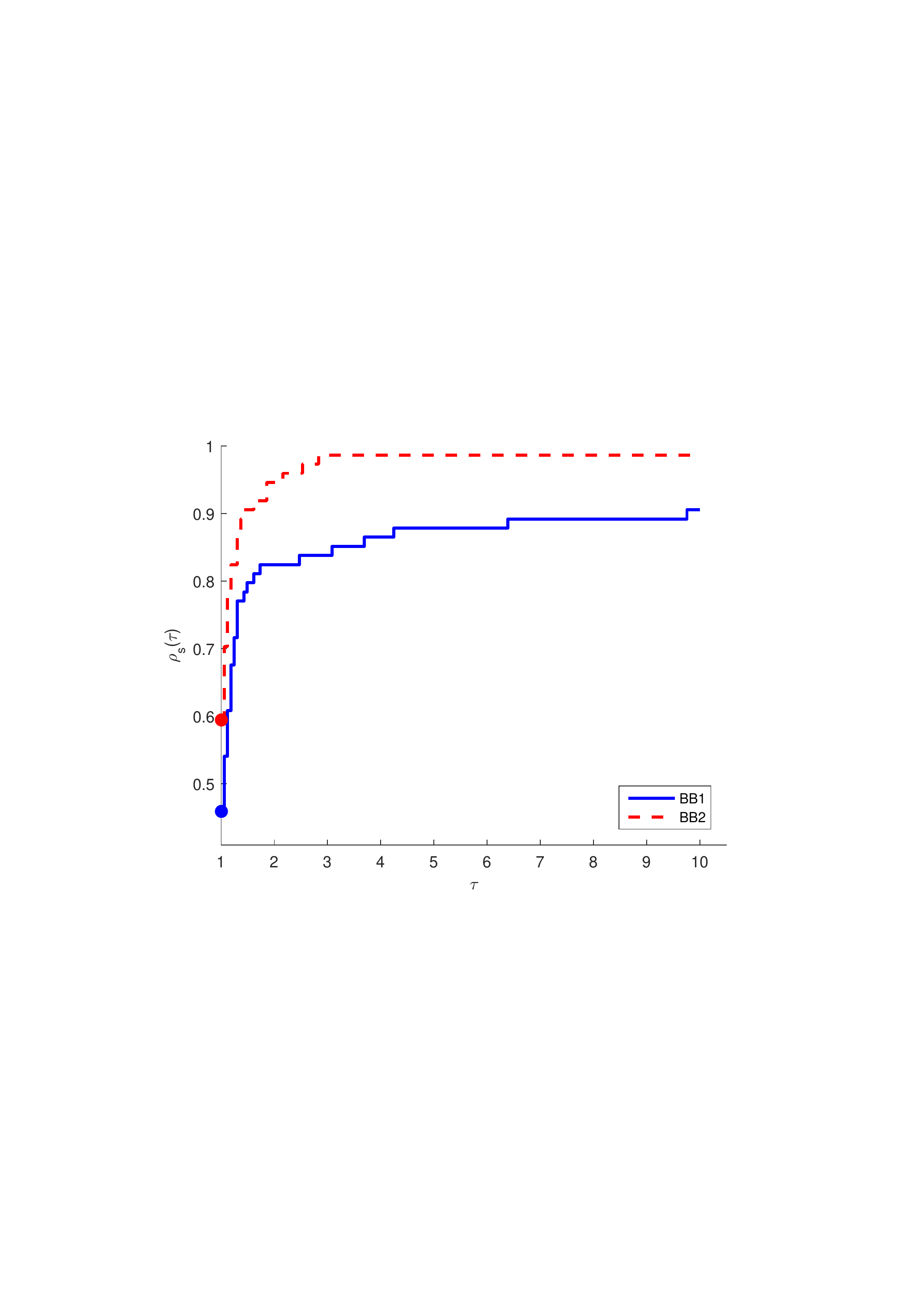}
	\vspace{-6.3cm}
	\caption{Performance profiles of the BB1 and BB2 algorithms adapted to solving nonconvex unconstrained minimization problems (based on solving problems from the CUTEst collection).}\label{BB1vsBB2}
\end{figure}

The performance of our algorithms was compared also for unconstrained minimization problems from the CUTEst collection \cite{cutest}, which provides a standard starting point $x_0$ for each of them.
We excluded from our comparison quadratic problems and those, in which the BB1/BB2 algorithm converged in less than $20$ iterations. The results reported here concern only the problems, where at least one of the compared algorithms converged, and also those, where the both algorithms converged to the same point.

Recall that the BB method was originally designed for solving convex problems in which case it is guaranteed that $\alpha_k^{BB}$ is nonnegative. Since the most of the unconstrained minimization test problems in the CUTEst collection are nonconvex, we had to adapt the BB method to solving this kind of problems.
In our implementation of the BB method and its stabilized version, we follow paper \cite{dai2015positive} in setting
\begin{equation}\label{positive_alpha}
\alpha_k^{BB} \leftarrow \dfrac{\|s_k\|}{\|y_k\|},
\end{equation}
whenever $\alpha_k^{BB} \le 0$. This makes our algorithms much more robust.
Figure \ref{BB1vsBB2} presents results of solving $74$ problems from the CUTEst collection. The BB1 and BB2 algorithms failed in $4$ and $3$ cases, respectively. The plots of the performance profiles introduced in \cite{profiles} indicate that the BB2 algorithm is more robust than the BB1. Furthermore, the former algorithm required, on average, fewer iterations for solving problems. The main reason is that the BB1 algorithm generates too long steps more frequently.
In what follows, we focus on presenting here results of stabilizing the BB1 algorithm, because it gains more from the stabilization than the BB2 algorithm.

\begin{table}[t!]
	\begin{center}
		\caption{Numerical results for unconstrained minimization problems from the CUTEst collection, adaptive selection of $\Delta$.
		}
		\label{tab:nonlin_c}
		\begin{scriptsize}
			\begin{tabular}{|l r r r c|l r r r c|}
	\hline
	\multicolumn{2}{|c}{PROBLEM} & \multicolumn{1}{c}{BB1} & \multicolumn{2}{c|}{BB1stab} & \multicolumn{2}{|c}{PROBLEM} & \multicolumn{1}{c}{BB1} & \multicolumn{2}{c|}{BB1stab}
	\\
	\multicolumn{1}{|c}{name} & \multicolumn{1}{c|}{$n$} & \multicolumn{1}{|c|}{it} & \multicolumn{1}{|c}{it} & \multicolumn{1}{c|}{$c$} & \multicolumn{1}{|c}{name} & \multicolumn{1}{c|}{$n$} & \multicolumn{1}{|c|}{it} & \multicolumn{1}{|c}{it} & \multicolumn{1}{c|}{$c$}
				\\
				\hline
				ALLINITU	&	4	&	21	&	\bf 21	&	0.1	&	EXTROSNB	&	1\,000	&	23	&	\bf 23		&	0.5	\\
				ARGTRIGLS	&	200	&	626	&	\bf 626	&	0.5	&	FLETCBV2	&	5\,000	&	30\,225	&	98\,735		&	1	\\
				BA-L1LS	&	57	&	34	&	\bf 33	&	1	&	FLETCHCR	&	1\,000	&	1\,892	&	1\,964		&	1	\\
				BA-L16LS	&	66\,462	&	64	&	66	&	0.5	&	FREUROTH	&	5\,000	&	52	&	\bf 52		&	0.5	\\
				BA-L21LS	&	34\,134	&	197	&	\bf 179	&	1	&	HEART8LS	&	8	&	44	&	\bf 44		&	0.5	\\
				BA-L49LS	&	23\,769	&	65	&	\bf 60	&	1	&	HYDC20LS	&	99	&	35	&	\bf 35		&	1	\\
				BA-L52LS	&	192\,627	&	280	&	\bf 277	&	0.1	&	LUKSAN11LS	&	100	&	31	&	32		&	1	\\
				BA-L73LS	&	33\,753	&	65	&	69	&	0.5	&	LUKSAN12LS	&	98	&	40	&	\bf 38		&	0.5	\\
				BDQRTIC	&	5000	&	41	&	\bf 41	&	0.5	&	LUKSAN17LS	&	100	&	230	&	\bf 187		&	0.1	\\
				BROWNBS	&	2	&	4\,110	&	\bf 961	&	0.1	&	LUKSAN21LS	&	100	&	6\,284	&	28\,255		&	1	\\
				BROYDN3DLS	&	5\,000	&	21	&	\bf 21	&	0.1	&	LUKSAN22LS	&	100	&	64	&	\bf 51		&	0.1	\\
				BROYDN7D	&	5\,000	&	29	&	\bf 29	&	0.1	&	MOREBV	&	5\,000	&	54\,926	&	$>10^5$		&	1	\\
				BROYDNBDLS	&	5\,000	&	58	&	\bf 58	&	1	&	MSQRTALS	&	1\,024	&	71	&	\bf 56		&	0.5	\\
				CHAINWOO	&	4\,000	&	96	&	\bf 42	&	1	&	MSQRTBLS	&	1\,024	&	56	&	59		&	0.5	\\
				CHNROSNB	&	50	&	133	&	\bf 133	&	0.5	&	NCB20	&	5\,010	&	23	&	\bf 22		&	0.1	\\
				CHNRSNBM	&	50	&	93	&	\bf 93	&	0.1	&	NONDQUAR	&	5\,000	&	40\,401	&	89\,179		&	1	\\
				CRAGGLVY	&	5\,000	&	56	&	\bf 50	&	1	&	NONMSQRT	&	4\,900	&	54	&	\bf 54		&	0.1	\\
				CUBE	&	2	&	$>10^5$	&	\bf 61	&	1	&	OSCIGRAD	&	100\,000	&	81	&	\bf 81		&	1	\\
				CURLY10	&	10\,000	&	64	&	\bf 56	&	0.1	&	OSCIPATH	&	10	&	30	&	\bf 30		&	0.1	\\
				CURLY20	&	10\,000	&	56	&	\bf 56	&	0.5	&	PENALTY2	&	200	&	730	&	1\,909		&	1	\\
				CURLY30	&	10\,000	&	57	&	\bf 57	&	0.5	&	PENALTY3	&	200	&	21	&	\bf 21		&	0.5	\\
				DENSCHNF	&	2	&	122	&	\bf 31	&	0.5	&	POWELLSG	&	5\,000	&	65	&	\bf 47		&	1	\\
				DIXMAANE	&	3\,000	&	24	&	\bf 23	&	0.1	&	ROSENBR	&	2	&	$>10^5$	&	\bf 332		&	1	\\
				DIXMAANF	&	3\,000	&	24	&	\bf 23	&	0.1	&	ROSENBRTU	&	2	&	$>10^5$	&	\bf 85		&	1	\\
				DIXMAANI	&	3\,000	&	22	&	\bf 22	&	1	&	SCURLY30	&	10\,000	&	252	&	\bf 234		&	0.1	\\
				DIXMAANJ	&	3\,000	&	23	&	\bf 23	&	1	&	SPMSRTLS	&	4\,999	&	335	&	\bf 268		&	0.1	\\
				DIXMAANM	&	3\,000	&	773	&	\bf 515	&	1	&	SROSENBR	&	5\,000	&	$>10^5$	&	\bf 55		&	0.5	\\
				DIXMAANN	&	3\,000	&	711	&	\bf 502	&	0.5	&	SSBRYBND	&	5\,000	&	4\,247	&	11\,005		&	1	\\
				DIXMAANO	&	3\,000	&	589	&	\bf 417	&	1	&	SSCOSINE	&	5\,000	&	3\,882	&	10\,414		&	1	\\
				DIXMAANP	&	3\,000	&	310	&	\bf 305	&	0.1	&	TOINTGOR	&	50	&	40	&	44		&	0.5	\\
				EDENSCH	&	2\,000	&	48	&	\bf 36	&	1	&	TOINTGSS	&	5\,000	&	5\,006	&	\bf 5\,004		&	1	\\
				EIGENALS	&	2\,550	&	41	&	\bf 41	&	0.1	&	VAREIGVL	&	50	&	415	&	\bf 323		&	0.5	\\
				EIGENCLS	&	2\,652	&	145	&	170	&	0.5	&	VESUVIALS	&	8	&	235	&	$>10^5$		&	1	\\
				ERRINROS	&	50	&	2\,920	&	\bf 746	&	1	&	VESUVIOULS	&	8	&	256	&	$>10^5$		&	1	\\
				ERRINRSM	&	50	&	25\,807	&	\bf 7\,366	&	0.1	&	WATSON	&	12	&	120	&	217		&	0.1	\\
				
				\hline
			\end{tabular}
		\end{scriptsize}
	\end{center}
\end{table}
Table \ref{tab:nonlin_c} presents results of solving $70$ nonquadratic test problems from the CUTEst collection. We tried only three values of the parameter $c$ in the adaptive choice of $\Delta$ using \eqref{delta}, namely, $0.1$, $0.5$ and $1.0$. The BB1 and BB1stab algorithms were not able to solve problems during $10^5$ iterations in $4$ and $3$ cases, respectively. The BB1stab requires fewer number of iterations in $32$ cases, while the BB1 performs better only in $17$ cases. In $21$ cases, the BB1stab with the indicated values of $c$ requires the same number of iterations as the BB1.

We made experiments also with directly setting a certain value of $\Delta$ in the BB1stab. The trial values were $0.01$, $0.1$ and $1.0$.
For a few test problems, the results are better than for the aforementioned adaptive choice with $c = 0.1$, $0.5$ and $1.0$. For $22$ of $71$ problems, the number of iterations is smaller than in case of the BB1.
\begin{table}[t!]
	\begin{center}
		\caption{Numerical results for unconstrained minimization problems from the CUTEst collection, preselected $\Delta$.
		}
		\label{tab:nonlin_delta}
		\begin{scriptsize}
			\begin{tabular}{|l r r r c|l r r r c|}
				\hline
				\multicolumn{2}{|c}{PROBLEM} & \multicolumn{1}{c}{BB1} & \multicolumn{2}{c|}{BB1stab} & \multicolumn{2}{|c}{PROBLEM} & \multicolumn{1}{c}{BB1} & \multicolumn{2}{c|}{BB1stab}
				\\
				\multicolumn{1}{|c}{name} & \multicolumn{1}{c|}{$n$} & \multicolumn{1}{|c|}{it} & \multicolumn{1}{|c}{it} & \multicolumn{1}{c|}{$\Delta$} & \multicolumn{1}{|c}{name} & \multicolumn{1}{c|}{$n$} & \multicolumn{1}{|c|}{it} & \multicolumn{1}{|c}{it} & \multicolumn{1}{c|}{$\Delta$}
				\\
				\hline
				BROWNBS	&	2	&	4\,110	&	\bf 80	&	1	&	LUKSAN11LS	&	100	&	31	&	\bf 23		&	1	\\
				CHNROSNB	&	50	&	133	&	\bf 50	&	1	&	LUKSAN17LS	&	100	&	230	&	\bf 166		&	1	\\
				CHNRSNBM	&	50	&	93	&	\bf 41	&	1	&	MOREBV	&	5\,000	&	54\,926	&	\bf 44\,712		&	0.01	\\
				CUBE	&	2	&	$>10^5$	&	\bf 94	&	0.1	&	MSQRTALS	&	1\,024	&	71	&	\bf 66		&	0.1	\\
				DENSCHNF	&	2	&	122	&	\bf 31	&	1	&	NONMSQRT	&	4\,900	&	54	&	\bf 51		&	1	\\
				DIXMAANM	&	3\,000	&	773	&	\bf 715	&	1	&	OSCIPATH	&	10	&	30	&	\bf 27		&	1	\\
				DIXMAANO	&	3\,000	&	589	&	\bf 514	&	1	&	ROSENBR	&	2	&	$>10^5$	&	\bf 129		&	0.1	\\
				ERRINROS	&	50	&	2\,920	&	\bf 923	&	1	&	ROSENBRTU	&	2	&	$>10^5$	&	\bf 664		&	0.1	\\
				ERRINRSM	&	50	&	25\,807	&	\bf 6\,165	&	0.1	&	SPMSRTLS	&	4\,999	&	335	&	\bf 294		&	1	\\
				FLETCBV2	&	5\,000	&	30\,225	&	\bf 25\,325	&	1	&	SROSENBR	&	5\,000	&	$>10^5$	&	\bf 206		&	1	\\
				FLETCHCR	&	1\,000	&	1\,892	&	\bf 572	&	1	&	TQUARTIC	&	5\,000	&	F	&	\bf 5\,325		&	0.1	\\
				\hline
			\end{tabular}
		\end{scriptsize}
	\end{center}
\end{table}
These results are reported in Table \ref{tab:nonlin_delta}. The preselected values of $\Delta$ allowed the BB1stab to solve five problems of those not solved by the BB1, including problems MOREBV and TQUARTIC, in which the adaptive choice of $\Delta$ failed. In case of TQUARTIC, the BB1 terminated because of producing NaN (Not a Number) in Matlab. The experiments with the preselected values of $\Delta$ indicate that there is plenty of room for improving the very simple adaptive strategy proposed in this paper.

\begin{table}[thb]
	\begin{center}
		\caption{Numerical results for unconstrained minimization problems from the CUTEst collection, adaptive selection of $\Delta$.
		}
		\label{tab:nonlin_c_bb2}
		\begin{scriptsize}
			\begin{tabular}{|l r r r c|l r r r c|}
				\hline
				\multicolumn{2}{|c}{PROBLEM} & \multicolumn{1}{c}{BB2} & \multicolumn{2}{c|}{BB2stab} & \multicolumn{2}{|c}{PROBLEM} & \multicolumn{1}{c}{BB2} & \multicolumn{2}{c|}{BB2stab}
				\\
				\multicolumn{1}{|c}{name} & \multicolumn{1}{c|}{$n$} & \multicolumn{1}{|c|}{it} & \multicolumn{1}{|c}{it} & \multicolumn{1}{c|}{$c$} & \multicolumn{1}{|c}{name} & \multicolumn{1}{c|}{$n$} & \multicolumn{1}{|c|}{it} & \multicolumn{1}{|c}{it} & \multicolumn{1}{c|}{$c$}
				\\
				\hline
				BA-L21LS	&	34134	&	191	&	\bf	187		&	1	&	EIGENALS	&	2550	&	44	&	\bf	39			&	0.5	\\
				BA-L52LS	&	192627	&	358	&	\bf	316		&	1	&	EIGENCLS	&	2652	&	201	&	\bf	177			&	0.1	\\
				BDQRTIC	&	5000	&	41	&	\bf	37		&	0.1	&	INDEFM	&	100000	&	23	&	\bf	21			&	1	\\
				BROWNBS	&	2	&	4110	&	\bf	961		&	0.1	&	LUKSAN17LS	&	100	&	198	&	\bf	183			&	0.1	\\
				CHNRSNBM	&	50	&	54	&	\bf	45		&	0.1	&	MSQRTALS	&	1024	&	77	&	\bf	72			&	0.5	\\
				DENSCHNF	&	2	&	29	&	\bf	28		&	1	&	MSQRTBLS	&	1024	&	59	&	\bf	58			&	0.5	\\
				DIXMAANE	&	3000	&	21	&	\bf	20		&	0.1	&	NONDIA	&	5000	&	-	&	\bf	10599			&	0.5	\\
				DIXMAANF	&	3000	&	24	&	\bf	22		&	0.1	&	OSCIPATH	&	10	&	26	&	\bf	25			&	0.1	\\
				DIXMAANI	&	3000	&	25	&	\bf	20		&	0.1	&	PENALTY3	&	200	&	22	&	\bf	21			&	0.1	\\
				DIXMAANJ	&	3000	&	24	&	\bf	22		&	0.1	&	POWELLSG	&	5000	&	44	&	\bf	38			&	0.5	\\
				DIXMAANM	&	3000	&	610	&	\bf	425		&	0.1	&	VAREIGVL	&	50	&	490	&	\bf	407			&	0.1	\\
				DIXMAANN	&	3000	&	611	&	\bf	448		&	0.1	&	WATSON	&	12	&	340	&	\bf	170			&	0.5	\\
				DIXMAANO	&	3000	&	464	&	\bf	414		&	0.1	&		&		&		&					&		\\
				\hline
			\end{tabular}
		\end{scriptsize}
	\end{center}
\end{table}

For the BB2stab algorithm, we still tried the same three values of the parameter $c$ in the adaptive choice of $\Delta$ using \eqref{delta} as for BB1stab. In $77$ test problems, the BB2stab performs better in $25$ cases, while the BB2 performs better only in $15$ cases. Table \ref{tab:nonlin_c_bb2} presents results for all the cases when the BB2stab requires fewer number of iterations.

\section{Conclusions}
In the present paper, it was proposed to stabilize the conventional BB method by virtue of bounding the distance between sequential iterates. The purpose was to improve its convergence, when it is affected by too long steps $\|\alpha_k^{BB} g_k\|$, and also to make the BB method convergent, when it fails to converge. Both a theoretical and numerical study of the stabilized version was conducted. We have proved that the stabilization provides the BB method with a global convergence without recourse to using any line search. The numerical results presented here are highly encouraging. The proposed very simple adaptive selection of $\Delta$ was able to successfully trap a value which is appropriate for each specific problem. However, we hope that this paper will stimulate development of more efficient algorithms for adaptive selection of $\Delta$.

\bigskip
\noindent {\bf Acknowledgments.} Part of this work was done during Oleg Burdakov’s visit to
the Chinese Academy of Sciences, which was supported by the Visiting Scientist award under the Chinese Academy of Sciences President's International Fellowship Initiative for 2017.
The second author was supported by the Chinese Natural Science Foundation (No. 11631013)
and the National 973 Program of China (No. 2015CB856002).

\end{document}